\theoremstyle{plain}
\newtheorem{theorem}{Theorem}
\newtheorem{corollary}{Corollary}
\newtheorem{lemma}{Lemma}
\theoremstyle{definition}
\newtheorem{definition}{Definition}
\theoremstyle{remark}
\newtheorem{remark}{Remark}
\newtheorem{example}{Example}
\newtheorem*{notation}{Notation}
\def\bbN{\mathbb N}
\def\bbR{\mathbb R}
\def\bbZ{\mathbb Z}
\newcommand{\fB}{\mathfrak B}     \newcommand{\sB}{\mathcal B}
     \newcommand{\sN}{\mathcal N}
     \newcommand{\sO}{\mathcal O}
     \newcommand{\sP}{\mathcal P}
     \newcommand{\sV}{\mathcal V}
\newcommand{\al}{\alpha}
\newcommand{\ep}{\epsilon}
\newcommand{\vpi}{\varphi}
\newcommand{\si}{\sigma}
\newcommand{\la}{\lambda}
\newcommand{\de}{\delta}
\begin{document}

\title{Continuity of Translation Operators}

\author{Krishna B. Athreya and
Justin R. Peters}
\address{Department of Mathematics\\
    Iowa State University, Ames, Iowa}
\email{kba@iastate.edu}
\email{peters@iastate.edu}
\thanks{The second author acknowledges partial support from the National Science Foundation, DMS-0750986}

\subjclass[2000]{primary: 26A42. secondary: 28A25, 22F10}


\begin{abstract} For a Radon measure $\mu$ on $\bbR,$ we show that
$L^{\infty}(\mu)$ is invariant under the group of translation
operators $T_t(f)(x) = \mbox{$f(x-t)$}\ (t \in \bbR)$ if and only if
$\mu$ is equivalent to Lebesgue measure $m$. We also give necessary
and sufficient conditions for $L^p(\mu),\ 1 \leq p < \infty,$ to be
invariant under the group $\{ T_t\}$ in terms of the Radon-Nikodym
derivative w.r.t. $m$.
\end{abstract}

\maketitle
\section{Introduction}
One of the celebrated theorems of analysis is the existence of a
unique measure $m$ on $\bbR$ such that $m([0, 1]) = 1$ and $m(E + t)
= m(E)$ for any Borel subset $E \subset \bbR,$ for all $t \in \bbR.$
This theorem was later extended by Haar (\cite{Ha}) to the setting
of locally compact groups.

We could rephrase the theorem as follows: there is a unique Radon
measure $m$ on $\bbR$ for which $m([0, 1]) = 1$ and translation acts
as an isometry on $L^p(m),$ for some $p,\ 1 \leq p < \infty.$ That
is, for all $f \in L^p(m),\ ||T_t(f)||_p = ||f||_p, $ for all $t \in
\bbR,$ where $T_t(f)(x) = f(x-t).$

In this note, we ask the question: which Radon measures $\mu$ have
the property that $T_t$ maps $L^p(\mu)$ into itself, for all $t \in
\bbR$?  We make no assumption of continuity of the $T_t.$

We show that for $1 \leq p < \infty$ if the translation operators
$T_t$ map $L^p(\mu)$ into itself, for all $t \in \bbR,$ then each
$T_t$ is strongly continuous, and moreover for any compact
neighborhood $\sN$ of $0,$ there is a constant $C$ so that $ ||T_t||
\leq C$ for all $t \in \sN$ (Corollary~\ref{c:boundedtrans}) and
thus the group $T_t$ acts as a $C_0-$group on $L^p(\mu)$, i.e.
$\lim_{t \to 0} ||T_t(f) - f||_p = 0.$ (Theorem~\ref{t:transcont}).
On the other hand, if $T_t$ acts on $L^{\infty}(\mu)$ for all $t \in
\bbR$, the action is not even weakly continuous
(Theorem~\ref{t:notwklycont}). In Theorems~\ref{t:linfty} and
\ref{t:transmeas} we give necessary and sufficient conditions for
$T_t$ to map $L^p(\mu)$ to itself in the cases $p = \infty$ and $p <
\infty$ respectively.

A related problem was treated by D. Bell (\cite{Be}). He called a
Radon measure $\mu$ \emph{quasi-translation invariant} if the null
sets of $\mu$ are translation invariant. He showed that with
additional assumptions on $\mu$, $\mu$ must be equivalent to
Lebesgue measure. Bell's results are valid in the context of locally
compact abelian groups. We deduce Bell's results from our results
for the group $\bbR.$ Lamperti \cite{Lam} considered a general class
of isometries of $L^p(X, \mu),$ for $ p < \infty.$  Our results do
not overlap with his since the mappings we consider are not
isometries, except for the case of $L^{\infty}(\mu).$

While it is not difficult to see that some of our results may be
recast in a more general setting, beginning with
Lemma~\ref{l:intervals} we are restricted to the setting of $\bbR.$
For thematic unity, we have presented all results in the context of
$\bbR.$

\subsection{Background and Notation}  A
Radon measure is positive Borel measure $\mu$ on $\bbR$  such that
$\mu([-n, n]) < \infty$ for all $n \in \bbN$. The vector space
$C_{00}(\bbR)$ of continuous functions with compact support, has as
its dual space the vector space of signed Radon measures. The space
$C_{00}(\bbR)$ is dense in $L^p(\mu), \ 1 \leq p < \infty.$  The
space $L^{\infty}(\mu)$ is the dual space of $L^1(\mu).$

If $f$ is an extended real-valued function defined on $\bbR,\
T_t(f)$ will denote the function $T_t(f)(x) = f(x-t), \ t \in \bbR.$

Certain hypotheses will appear often, and so it is convenient to
state them here. \\
\textbf{Hypotheses} \\
\textbf{H1} $\mu$ is a Radon measure with the property that if $f,\
g $ are extended real-valued Borel functions such that $ f = g \
\text{a.e.} [\mu]$, then for all $t \in \bbR,\ T_t(f) = T_t(g) \ \text{a.e.} [\mu].$\\
\textbf{H2(a)} Given a Radon measure $\mu$ and for some $p,\ 1 \leq
p < \infty,$ if $f$ is a Borel function such that
\[ \int_{\bbR} |f(x)|^p \,d\mu(x) < \infty, \text{ then }
\int_{\bbR} |T_t(f)(x)|^p\, d\mu(x) < \infty, \text{ for all } t \in
\bbR.\] \\
\textbf{H2(b)} Given a Radon measure $\mu,$ an extended real-valued
Borel function $f$ is essentially $[\mu]-$bounded if and only if
$T_t(f)$ is essentially $[\mu]-$bounded, for all $t \in \bbR.$

In \cite{Be} Bell made the following
\begin{definition} \label{d:qti} A Radon measure $\mu$ is
called \emph{quasi-translation invariant (q.t.i.)} if
\[ \forall E \in \fB,\ \mu(E) = 0 \iff \mu(E+t) = 0 \text{ for all } t \in \bbR\]
where $\fB$ is the $\si$-algebra of Borel sets.
\end{definition}

While the following result is obvious, it is nonetheless worth
recording.
\begin{lemma} \label{l:equivclass}
For a Radon measure $\mu$, hypothesis (H1) is equivalent to q.t.i.
\end{lemma}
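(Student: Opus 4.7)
The plan is to translate directly between the function-level statement (H1) and the set-level statement of q.t.i.\ by using characteristic functions, exploiting the identity $T_t(\chi_E) = \chi_{E+t}$.

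First I would prove (H1) $\Rightarrow$ q.t.i. Fix a Borel set $E$ with $\mu(E)=0$ and a translation parameter $t \in \bbR$. Set $f = \chi_E$ and $g \equiv 0$. Then $f = g$ a.e.\ $[\mu]$, so (H1) forces $T_t(f) = T_t(g) = 0$ a.e.\ $[\mu]$. Since $T_t(\chi_E)(x) = \chi_E(x-t) = \chi_{E+t}(x)$, this gives $\mu(E+t)=0$. Applying the same argument to $-t$ yields the reverse implication, establishing q.t.i.

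For the converse, assume $\mu$ is q.t.i.\ and suppose $f,g$ are extended real-valued Borel functions with $f = g$ a.e.\ $[\mu]$. Let $N = \{x \in \bbR : f(x) \neq g(x)\}$, which is Borel and satisfies $\mu(N) = 0$. Observe that
\[
\{x : T_t(f)(x) \neq T_t(g)(x)\} = \{x : f(x-t) \neq g(x-t)\} = N + t.
\]
By q.t.i.\ applied to the set $N$, we have $\mu(N+t) = 0$, hence $T_t(f) = T_t(g)$ a.e.\ $[\mu]$, which is (H1).

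There is no real obstacle here; both directions are essentially one-line arguments once one recognizes that q.t.i.\ is the restriction of (H1) to indicator functions and that (H1) for general $f,g$ reduces, via the exceptional set $N$, to the q.t.i.\ statement for $N$. The only minor point to note is that the exceptional sets involved are genuinely Borel (so that q.t.i.\ is applicable), which is automatic since $f$ and $g$ are assumed Borel.
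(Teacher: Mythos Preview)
Your proof is correct and is exactly the natural argument the paper has in mind; the paper itself does not spell out a proof, merely noting that the result is obvious but worth recording. Your reduction via $N=\{x:f(x)\neq g(x)\}$ and the identity $T_t(\chi_E)=\chi_{E+t}$ is precisely the intended route.
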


\begin{remark} \label{r:equivalentcond} It is easy to see that each
of hypotheses H2(a),\ H2(b) implies H1, but we find it useful to
state the conditions separately.
\end{remark}

\begin{remark} \label{r:group}
(H1) implies that the group of translation operators $\{T_t\}_{t \in
\bbR}$ not only acts on functions, but acts as a group of linear
operators on equivalence classes of functions. By that we mean that
each $T_t$ is a linear operator, and $T_s\circ T_t = T_{s+t}$ for
all $s,\ t \in \bbR,$ and $T_0 = I. $ (H1) and (H2(a)) (resp., (H1)
and H2(b))) together imply that the group $\{T_t\}_{t\in \bbR}$ acts
on $L^p(\mu) $ (resp., $L^{\infty}(\mu).$)

Conversely, if $\{T_t\}_{t \in \bbR}$ acts as a group of linear
operators on some $L^p(\mu)$, then $\mu$ must satisfy (H1), or
equivalently q.t.i. Because if $\mu(E) > 0$ but $\mu(E+t) = 0$ for
some $t \in \bbR,$ then $\chi_E = T_0(\chi_E) = T_{-t}\circ
T_t(\chi_E) = T_{-t}(0) = 0.$  This contradiction shows that the
group property of the linear operators $\{T_t\}$ on $L^p(\mu)$ is
equivalent to q.t.i.
\end{remark}

From now on $m$ will denote Lebesgue measure.

\section{Continuity of Translation}

\begin{lemma}\label{l:autocont}
Let $\mu$ be a Radon measure  satisfying either (H1) and (H2(a)), or
(H1) and (H2(b)). Then for each $t \in \bbR,\ T_t$ is continuous.
\end{lemma}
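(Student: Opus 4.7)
The natural plan is to invoke the closed graph theorem. Under (H1) combined with either (H2(a)) or (H2(b)), Remark~\ref{r:group} tells us that each $T_t$ is a well-defined linear map from the Banach space $L^p(\mu)$ (resp.\ $L^\infty(\mu)$) into itself, so it suffices to verify that the graph of $T_t$ is closed.

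Suppose therefore that $f_n \to f$ in $L^p(\mu)$ and $T_t(f_n) \to g$ in $L^p(\mu)$, where $1 \le p < \infty$. I would first pass to a subsequence $f_{n_k}$ converging to $f$ pointwise outside a Borel null set $N$ with $\mu(N)=0$. The key observation is then that for every $x$ with $x-t \notin N$,
\[ T_t(f_{n_k})(x) = f_{n_k}(x-t) \longrightarrow f(x-t) = T_t(f)(x), \]
i.e.\ $T_t(f_{n_k}) \to T_t(f)$ pointwise off the set $N+t$. By (H1), equivalently q.t.i.\ (Lemma~\ref{l:equivclass}), we have $\mu(N+t) = 0$, so the convergence is a.e.~$[\mu]$. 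On the other hand, from $T_t(f_n) \to g$ in $L^p(\mu)$ I can extract a further subsequence converging a.e.~$[\mu]$ to $g$. Both limits agree a.e.~$[\mu]$, hence $T_t(f) = g$ as elements of $L^p(\mu)$; the graph is closed.

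For $p = \infty$ the same scheme applies under (H2(b)): convergence in $L^\infty(\mu)$ already implies uniform, hence pointwise, convergence off a $\mu$-null set, so no subsequence extraction is needed, and the identical translation-of-null-sets argument via q.t.i.\ shows $T_t(f) = g$ a.e.~$[\mu]$.

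The only nontrivial point is the transfer of the $\mu$-null exceptional set from $x$ to $x-t$, and this is precisely what (H1) is tailored to supply; otherwise the argument is a standard closed graph application. I do not anticipate an obstacle beyond being careful that the subsequence extractions are performed in the right order so that both a.e.\ limits can be compared on a common conull set.
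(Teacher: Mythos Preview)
Your proposal is correct and follows the same route as the paper: both invoke the Closed Graph Theorem on $L^p(\mu)$ after observing via Remark~\ref{r:group} that $T_t$ is a well-defined linear self-map. The paper's proof is terser---it notes that $T_t$ is a bijection and then appeals to the Closed Graph Theorem without writing out the graph-closedness verification---whereas you supply exactly that verification, using q.t.i.\ to transport the $\mu$-null exceptional set through the translation; this is the natural way to fill in the one step the paper leaves implicit.
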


\begin{proof}
Let $\sV = L^p(\mu)$. By Remark~\ref{r:group}, the maps $\{T_t\}$
act as a group on $\sV.$ Thus $T_t(\sV) = \sV$ so that  $T_t$ is
onto, and in particular has closed range.  It now follows from the
Closed Graph Theorem (\cite{Ru1}) that for each $t \in \bbR,\ T_t$
is strongly continuous on $L^p(\mu)$.
\end{proof}

\begin{remark} \label{r:continuity} The simple application above of the
Closed Graph Theorem yields that the maps $T_t$ are individually
continuous. A more difficult question is whether the \emph{group}
$\{T_t\}_{t\in \bbR}$ is continuous in some sense. We will see there
is a sharp distinction between the cases $p < \infty$ and $p =
\infty.$
\end{remark}

\begin{definition} \label{d:contmeas} A Radon measure $\mu$ is
\emph{continuous} if for all $x \in \bbR,$\ \mbox{$\mu(\{x\}) = 0.$}
\end{definition}

\begin{lemma} \label{l:support}
Let $\mu$ satisfy (H1) and (H2(a)), or (H1) and (H2(b)). Then $\mu$
is a continuous measure, and support$(\mu) = \bbR.$
\end{lemma}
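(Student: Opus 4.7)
The plan is to derive both assertions from q.t.i.~alone (which, by Lemma~\ref{l:equivclass}, is equivalent to~(H1)), together with the Radon finiteness condition. In particular, neither (H2(a)) nor (H2(b)) is needed for this lemma once we have (H1). Throughout I will tacitly assume $\mu\neq 0$; otherwise the conclusion about support fails and one would have to exclude the trivial measure by hypothesis.

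For the continuity claim, I would argue by contradiction: suppose $\mu(\{x_0\})>0$ for some $x_0\in\bbR$. Since $\{x_0+t\}=\{x_0\}+t$, q.t.i.~forces $\mu(\{x_0+t\})>0$ for every $t\in\bbR$. To convert this into a contradiction with the Radon property, I would stratify the atoms by size. Fix $n\in\bbN$ and, for each $k\in\bbN$, set
\[
A_{n,k}=\{\,t\in[-n,n]:\mu(\{x_0+t\})>1/k\,\}.
\]
Then $[-n,n]=\bigcup_k A_{n,k}$. If any $A_{n,k}$ were infinite, the interval $[x_0-n,x_0+n]$ would contain infinitely many disjoint singletons each of $\mu$-measure at least $1/k$, forcing $\mu([x_0-n,x_0+n])=\infty$ and contradicting that $\mu$ is Radon. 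Hence every $A_{n,k}$ is finite, which makes $[-n,n]$ countable, the desired contradiction.

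For the support claim I would again argue by contradiction. Suppose there is $y\notin\supp(\mu)$; then there exists $\varepsilon>0$ with $\mu\bigl((y-\varepsilon,y+\varepsilon)\bigr)=0$. By q.t.i., $\mu\bigl((y-\varepsilon+s,y+\varepsilon+s)\bigr)=0$ for every $s\in\bbR$. Choosing $s=n\varepsilon$ for $n\in\bbZ$ yields a countable family of open intervals of length $2\varepsilon$, consecutive ones overlapping, whose union is all of $\bbR$. Countable subadditivity then gives $\mu(\bbR)=0$, contradicting $\mu\neq 0$.

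The main (very mild) obstacle is the continuity part: one must resist the temptation to say ``uncountably many disjoint positive-measure sets give infinite total measure'' without care, since the atoms can shrink in size. Partitioning by a lower bound $1/k$ on the atom size and then pushing the contradiction into finiteness of each $A_{n,k}$ is the right way to make the cardinality argument compatible with $\sigma$-finiteness. The support half is a routine covering argument once q.t.i.~is in hand.
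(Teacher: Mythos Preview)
Your proof is correct and follows essentially the same approach as the paper. For the support claim the two arguments are identical up to phrasing. For continuity, the paper simply cites the standard fact that a Radon measure has at most countably many atoms, picks a point $y$ with $\mu(\{y\})=0$, and invokes q.t.i.\ to contradict $\mu(\{x_0\})>0$; you instead use q.t.i.\ first to make every singleton an atom and then prove the countability-of-atoms fact inline via your stratification $A_{n,k}$. The logical content is the same---your version is just more self-contained.
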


\begin{proof}
Now suppose that $\mu(\{x_0\}) > 0.$ Since a Radon measure can have
at most countably many points with positive measure, there is some
$y \in \bbR$ with $\mu(\{y\}) = 0.$ But then a translation operator
maps the characteristic function of the point $x$ to that of $y$,
contradicting (H1).

If $support(\mu) \neq \bbR.$ then by definition there is a nonempty
open set $ \sO \subset \bbR$ such that $\mu(\sO) = 0.$ Let $I = (a,
b)$ be an interval contained $\sO.$ Set $ d = \frac{b-a}{2},$ and
cover $\bbR$ by intervals $I + nd,\ n \in \bbZ.$ Since this is a
countable open cover of $\bbR,$ and the measure $\mu \neq 0,$ for
some $n$ we must have that $\mu(I + nd) > 0.$ But then
\[ ||T_t(\chi_I)||_p \neq 0, \text{ but } ||\chi_I||_p = 0,\] for $t
= -nd,$ contradicting (H1).
\end{proof}

The statement that support $\mu = \bbR$ was proved in \cite{Be}; we
included a proof here for completeness.

\begin{lemma} \label{l:qti}
Let $\mu$ be a q.t.i. Radon measure on $\bbR.$ Then given a bounded
Borel set E, a sequence $\{ t_n\}$ in $\bbR$ which converges to
zero, and $\ep
> 0,$ there exists $N \in \bbN$ such that for $n \geq N,$
\[ \mu( \cup_{k=n}^{\infty} (E + t_k) ) < \mu(E) + \ep .\]
\end{lemma}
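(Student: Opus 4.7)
The plan is to approximate $E$ from outside by an open set $U$ and from inside by a compact set $K$, then use the fact that any sufficiently small translation sends $K$ into $U$. Since $E$ is bounded and $t_n \to 0$, I may assume $E \subseteq [-R, R]$ and $|t_n| \leq 1$, so every $E + t_n$ lies in the fixed bounded set $[-R-1, R+1]$, which has finite $\mu$-measure. Using outer regularity of the Radon measure $\mu$, I would choose an open set $U \supseteq E$ with $\mu(U) < \mu(E) + \epsilon/2$; using inner regularity, I would choose a compact set $K \subseteq E$ with $\mu(E \setminus K) < \epsilon/2$. Since $K$ is compact and $U^c$ is closed and disjoint from $K$, the separation $\delta := d(K, U^c) > 0$.

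Next, choose $N$ so that $|t_k| < \delta$ for all $k \geq N$; then $K + t_k \subseteq U$ for such $k$, and hence for every $n \geq N$,
\[ \mu\Bigl(\bigcup_{k=n}^{\infty}(K + t_k)\Bigr) \leq \mu(U) < \mu(E) + \epsilon/2. \]
The decomposition $\bigcup_{k=n}^{\infty}(E + t_k) = \bigcup_{k=n}^{\infty}(K + t_k) \;\cup\; \bigcup_{k=n}^{\infty}((E \setminus K) + t_k)$ then reduces the problem to bounding $\mu\bigl(\bigcup_{k=n}^{\infty}((E \setminus K) + t_k)\bigr) < \epsilon/2$ for sufficiently large $n$.

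This last estimate is the main obstacle, because q.t.i.\ preserves only null sets under translation, not sets of small positive measure, so individual translates $(E \setminus K) + t_k$ need not be small. My approach is to exploit that q.t.i.\ already forces $\mu$ to be atomless (any atom could be translated to a non-atom, contradicting q.t.i., since a Radon measure has only countably many atoms), so the boundary of any finite union of open intervals has $\mu$-measure zero. I would then cover $E \setminus K$ by a countable disjoint union of open intervals of total $\mu$-measure $< \epsilon/2$, truncate to a finite sub-union whose tail contributes negligibly to $\mu$, and rerun the compact--open separation argument on this truncation so that the slight boundary enlargement is absorbed into the error budget. Controlling the infinite tail uniformly in $k$, so that its translates do not accumulate to a set of non-negligible $\mu$-measure, is the technical point I expect to be most delicate.
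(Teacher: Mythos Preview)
Your outer/inner approximation with the separation $\delta = d(K, U^c)$ is a clean way to handle the compact part: once $|t_k| < \delta$ you get $\bigcup_{k \ge n}(K+t_k) \subseteq U$ directly, which replaces the paper's device of showing $\limsup_k (C+t_k) \subseteq C$ by compactness and then invoking continuity from above. That portion of your plan is fine, and arguably simpler than what the paper does.

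The gap is exactly where you flag it: bounding $\mu\bigl(\bigcup_{k \ge n}((E\setminus K)+t_k)\bigr)$. Your proposed fix---cover $E\setminus K$ by open intervals of small total $\mu$-measure, truncate to a finite union $V$, and rerun the separation argument---is circular. After truncation you are left with $(E\setminus K)\setminus V$, a set of small but still \emph{positive} $\mu$-measure, and you face the identical problem of controlling its countably many translates; nothing in q.t.i.\ prevents each translate from having large measure, and the argument does not terminate. The ``infinite tail'' you worry about is not a technical nuisance but the whole difficulty.

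The paper's resolution is to exploit that q.t.i.\ acts only on \emph{null} sets, so one should arrange the remainder to be genuinely null rather than merely small. Take an increasing sequence of compacts $C_n \subseteq E$ with $\mu\bigl(E\setminus \bigcup_n C_n\bigr) = 0$; then by q.t.i.\ every translate $(E\setminus \bigcup_n C_n)+t_k$ is null, hence $\bigcup_k (E+t_k)$ and $\bigcup_n \bigcup_k (C_n+t_k)$ agree up to a null set. Continuity from below in $n$ (everything sits in a fixed interval of finite $\mu$-measure) then lets you pick a single $C = C_n$ with $\mu\bigl(\bigcup_k (E+t_k)\setminus \bigcup_k (C+t_k)\bigr) < \epsilon/2$. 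That is the missing idea; once you have it, you can finish either with the paper's limsup argument or with your own $K \subseteq U$ separation applied to $C$.
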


\begin{proof}
Let $I$ be a compact interval in $\bbR$ such that $ E,\ E + t_n
\subset I$ for all $n \in \bbN.$  Since $\mu(E) \leq \mu(I) <
\infty,$ there is an increasing sequence $\{C_n\}$ of compact
subsets of $E$ such that $ E \setminus \cup_{n=1}^{\infty} C_n$ is a
$\mu$-null set. But then
\begin{align*}
& \cup_{k=1}^{\infty} (E+t_k) \setminus \cup_{n=1}^{\infty}
\cup_{k=1}^{\infty} (C_n + t_k) \\
&\subset \cup_{k=1}^{\infty}( (E \setminus \cup_{n=1}^{\infty} C_n)
+ t_k)
\end{align*}
Since $\mu$ is q.t.i., $(E \setminus \cup_{n=1}^{\infty} C_n) + t_k$
is a $\mu$ null, as is the countable union of these sets.

Thus (\cite{Ru2}, Theorem 1.19) there is a compact set $C = C_n$
such that
\[ \mu(\cup_{k=1}^{\infty} (E+t_k) \setminus
\cup_{k=1}^{\infty} (C + t_k)) < \ep .\]

Let $F = \cap_{n=1}^{\infty} \cup_{k=n}^{\infty} (C + t_k).$ Thus if
$x \in F,$ there is a subsequence $t_{k_j}$ and $x_j \in C$ such
that $ x = x_j + t_{k_j}$ for all $j$.  Since $\{t_k\}$ converges to
zero, that implies $x$ is a limit point of $\{x_j\}, $ hence in $C$.
Now there is $N \in \bbN$ such that for $n \geq N$
\[ \mu(\cup_{k=n}^{\infty} (C + t_k) \setminus F) < \ep .\]
Thus, for $n \geq N,$
\begin{align*}
\mu(\cup_{k=n}^{\infty} (E+t_k) \setminus F) &\leq
\mu(\cup_{k=n}^{\infty} (E+t_k) \setminus \cup_{k=n}^{\infty}
(C+t_k))
\\
& + \mu(\cup_{k=n}^{\infty} (C+t_k) \setminus F) \\
& < \ep + \ep
\end{align*}
In other words,
\begin{align*}
\mu(\cup_{k=n}^{\infty} (E+t_k)) &< \mu(F) + 2\ep \\
    &< \mu(E) + 2\ep
\end{align*}
\end{proof}

\begin{lemma} \label{l:conttrans} Let $\mu$ be a q.t.i. Radon
measure.  Let $g$ be a bounded Borel function with bounded support,
and let $\{ t_n\}$ be a sequence which converges to $0$. Then
$\{T_{t_n}(g)\} $ converges in measure to $g$. Hence there is a
subsequence which converges $a.e. [\mu].$
\end{lemma}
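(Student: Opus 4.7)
The plan is to establish convergence in measure by approximating $g$ by a continuous function of compact support via Lusin's theorem (applicable since $\mu$ is Radon and the support of $g$ has finite $\mu$-measure), and then bounding a three-set decomposition of the exceptional set. Fix $\de > 0$ and $\ep > 0$. First I would choose $h \in C_{00}(\bbR)$ with $\mu(F) < \ep$, where $F := \{x \in \bbR : g(x) \neq h(x)\}$; this $F$ is a bounded Borel set because $g$ and $h$ both have bounded support. Uniform continuity of $h$ yields $N_2$ such that $\|T_{t_n}(h) - h\|_\infty < \de$ for all $n \geq N_2$, while Lemma~\ref{l:qti} applied to the bounded Borel set $F$ yields $N_1$ such that
\[ \mu(F + t_n) \leq \mu\bigl(\cup_{k \geq n}(F+t_k)\bigr) < \mu(F) + \ep < 2\ep \qquad (n \geq N_1). \]

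The key observation is the set inclusion
\[ \{x : |T_{t_n}(g)(x) - g(x)| > \de\} \subseteq (F + t_n) \cup \{x : |T_{t_n}(h)(x) - h(x)| > \de\} \cup F, \]
which holds because on the complement of $F \cup (F+t_n)$ one has both $g(x) = h(x)$ and $g(x-t_n) = h(x-t_n)$, so $T_{t_n}(g)(x) - g(x) = T_{t_n}(h)(x) - h(x)$; here I also use that $\{T_{t_n}(g) \neq T_{t_n}(h)\} = F + t_n$ since $T_{t_n}$ acts pointwise on Borel representatives. For $n \geq \max(N_1, N_2)$ the middle set is empty, so the $\mu$-measure of the left side is bounded by $2\ep + \ep = 3\ep$. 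Since $\ep$ is arbitrary and $\de$ was fixed, $\mu(\{|T_{t_n}(g) - g| > \de\}) \to 0$, which is convergence in measure (note that for $|t_n| \leq 1$ all functions in play are supported in a single compact set of finite $\mu$-measure, so the notion is unambiguous).

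For the final assertion, a standard diagonal extraction produces a subsequence $\{t_{n_k}\}$ with $\mu(\{|T_{t_{n_k}}(g) - g| > 2^{-k}\}) < 2^{-k}$, and the Borel--Cantelli lemma then gives $T_{t_{n_k}}(g) \to g$ a.e.\ $[\mu]$. The main subtlety lies in controlling the translate $F + t_n$: a priori the q.t.i.\ hypothesis alone preserves only null sets, so the smallness $\mu(F) < \ep$ need not transfer to $\mu(F + t_n)$ for any individual $n$. Lemma~\ref{l:qti} is precisely what upgrades preservation of null sets to an approximate preservation of small positive measure along a sequence $t_n \to 0$, and it is the essential ingredient that makes the three-piece estimate go through.
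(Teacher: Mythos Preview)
Your proof is correct and follows essentially the same approach as the paper: Lusin approximation, control of the translated exceptional set via Lemma~\ref{l:qti}, and a three-term triangle-inequality decomposition. The only cosmetic differences are that the paper takes $E = \{|g-h| \geq \ep\}$ rather than your $F = \{g \neq h\}$ (both follow from Lusin), and that you cleanly separate the threshold $\de$ from the accuracy $\ep$, whereas the paper identifies them.
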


\begin{proof}

Given $\ep > 0$ by Lusin's Theorem (\cite{Ru2}, Theorem 2.23) there
is a continuous function $h$ with compact support, $||h||_{\infty}
\leq ||g||_{\infty}$ and such that if
\[ E = \{x:\ |g(x) - h(x)| \geq  \ep \} \]
then $\mu(E) < \ep.$

By Lemma~\ref{l:qti}, there is $N \in \bbN$ such that for $n \geq
N,$
\[ \mu(E \cup_{k=n}^{\infty} (E+t_k)) < 2 \ep .\]
Now choose $j \geq N$ such that for all $x \in \bbR,$
\[ |h(x-t_j) - h(x)| < \ep .\]
Then for $x \notin E \cup_{k=N}^{\infty} (E + t_k),$
\begin{align*}
|g(x) - g(x - t_j)| &\leq |g(x) - h(x)| \\
    &+ |h(x) - h(t - t_j)| + |h(x-t_j) - g(x-t_j)| \\
    &< 3 \ep
\end{align*}
It follows that $\{T_{t_k}(g)\}$ converges in measure to $g$, hence
there is a subsequence converging a.e. $[\mu].$ (\cite{Ru2}, p. 73)
\end{proof}

\begin{remark} Note that if $\mu$ is a continuous Radon measure
which is not q.t.i., neither of the conclusions of
Lemmas~\ref{l:qti} or \ref{l:conttrans} need hold.

Let $C$ be the Cantor set, and $\vpi: C \to [0, 1]$ defined as
follows: if $x \in C,\ x = \sum_{n=1}^{\infty} \frac{x_n}{3^n}$
where $x_n \in \{0, 2\},$ let $\vpi(x) = \sum_{n=1}^{\infty}
\frac{x_n}{2^{n+1}}.$ Then the Cantor measure $\mu$ can be defined
by $\mu(E) = m(\vpi(E\cap C)).$ (Recall $m$ is Lebesgue measure.)

Now $\mu(C) = 1,\ \mu(C + \frac{1}{3^n}) = 0, $ so that
$T_{\frac{1}{3^n}}(\chi_C)$ does not converge to $\chi_C$ in
measure. So the conclusion of Lemma~\ref{l:conttrans} fails.

If $E = \cup_{n=1}^{\infty} (C + \frac{1}{3^n}),$ then $\mu(E) = 0,$
but $\mu(E  + t_n) = 1$ for all $n$ if $t_n = -\frac{1}{3^n}$, so
that for all $n$, $\mu( \cup_{k=n}^{\infty} (E + t_k) ) = 1.$  Thus
the conclusion of Lemma~\ref{l:qti} fails.
\end{remark}

\begin{corollary} \label{c:conttrans} Let $\mu$ satisfy (H1) and
(H2(a)).  Let $f \in L^p(\mu) $ and $\{t_k\}$ a sequence converging
to $t_0.$ Then there is a subsequence $S$ of $\{t_n\}$ such that
$T_{t_k}(f) $ converges to $T_{t_0}(f)$ a.e.$[\mu].$
\end{corollary}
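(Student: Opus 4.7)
The plan is to reduce to $t_0 = 0$ via the group structure, show that $T_{s_k}(f) \to f$ in $\mu$-measure on every bounded interval, and then extract an a.e.\ convergent subsequence using Riesz's theorem together with a diagonal argument over the exhaustion $\bbR = \bigcup_n [-n,n]$.

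First, (H2(a)) gives $T_{t_0}(f) \in L^p(\mu)$, and Remark~\ref{r:group} gives $T_{t_k}(f) = T_{s_k}(T_{t_0}(f))$ with $s_k := t_k - t_0 \to 0$. Replacing $f$ by $T_{t_0}(f)$ reduces the statement to: for $f \in L^p(\mu)$ and $s_k \to 0$, some subsequence of $\{T_{s_k}(f)\}$ converges a.e.\ to $f$. Next I would localize: fix a bounded interval $I$ and choose $N$ with $I + [-1,1] \subset [-N,N]$, then set $g := f\chi_{[-N,N]}$. For $|s_k| \leq 1$ the restrictions of $T_{s_k}(f) - f$ and of $T_{s_k}(g) - g$ to $I$ coincide, so it suffices to show $T_{s_k}(g) \to g$ in $\mu$-measure. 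Since $g$ has bounded support but may be unbounded, truncate once more: $\tilde{g}_M := g\chi_{\{|g| \leq M\}}$ is a bounded Borel function with bounded support, Lemma~\ref{l:conttrans} gives $T_{s_k}(\tilde{g}_M) \to \tilde{g}_M$ in $\mu$-measure for each fixed $M$, and $g - \tilde{g}_M \to 0$ in $L^p(\mu)$ as $M \to \infty$.

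The main obstacle is the cross term $T_{s_k}(g - \tilde{g}_M)$: no a priori uniform operator bound on $\{T_{s_k}\}$ is available, so the $L^p$-smallness of $g - \tilde{g}_M$ cannot simply be pushed through $T_{s_k}$. This is precisely where Lemma~\ref{l:qti} enters. For $\phi := g - \tilde{g}_M$ and $\eta > 0$, the set-level identity
\[ \{x : |T_{s_k}(\phi)(x)| > \eta\} = \{y : |\phi(y)| > \eta\} + s_k, \]
combined with the fact that $\{|\phi| > \eta\}$ is a bounded Borel set, lets Lemma~\ref{l:qti} supply $\mu(\{|\phi| > \eta\} + s_k) < \mu(\{|\phi| > \eta\}) + \epsilon$ for $k$ sufficiently large; Chebyshev's inequality bounds $\mu(\{|\phi| > \eta\})$ by $\eta^{-p}\|g - \tilde{g}_M\|_p^p$, which is arbitrarily small once $M$ is large. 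Decomposing
\[ T_{s_k}(g) - g = T_{s_k}(g - \tilde{g}_M) + (T_{s_k}(\tilde{g}_M) - \tilde{g}_M) + (\tilde{g}_M - g) \]
and applying the triangle inequality in $\mu$-measure then yields $T_{s_k}(g) \to g$ in $\mu$-measure, hence $T_{s_k}(f) \to f$ in $\mu$-measure on $I$.

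Finally, Riesz's theorem produces a subsequence converging a.e.\ on $I$; applying this to $I_n = [-n,n]$ and extracting diagonally yields a single subsequence of $\{t_k\}$ along which $T_{t_k}(f) \to T_{t_0}(f)$ a.e.\ on $\bbR$. Conceptually, Lemma~\ref{l:qti} plays the role here that exact translation invariance of Lebesgue measure plays in the classical argument: it guarantees that translates of small sets remain small, which is just enough to substitute for the missing uniform operator bounds.
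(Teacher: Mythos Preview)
Your argument is correct, but it takes a longer route than the paper's. The paper simply truncates $f$ simultaneously in value and in support, defining $f_n$ so that $f_n(y)=f(y)$ as soon as $n\geq\max(|y|,|f(y)|)$; it then applies Lemma~\ref{l:conttrans} to each $f_n$, extracts a.e.\ convergent subsequences, diagonalizes over $n$, and finishes with a purely pointwise observation: at any $x$ where $f_n(x-t_k)\to f_n(x-t_0)$ for every $n$ and where $f(x-t_0)$ is finite, taking $n$ large enough forces $f(x-t_k)\to f(x-t_0)$. This sidesteps entirely the need to control the tail $T_{s_k}(g-\tilde g_M)$ in measure, and hence the direct appeal to Lemma~\ref{l:qti} that you make. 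Your route has the merit of proving the stronger intermediate statement that $T_{s_k}(f)\to f$ in $\mu$-measure on every bounded interval, and of isolating precisely where q.t.i.\ (through Lemma~\ref{l:qti}) stands in for the missing uniform operator bounds; the paper's route is shorter because the eventual pointwise equality $f_n=f$ lets it avoid that issue altogether.
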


\begin{proof} Let \[f_n(x) =
\begin{cases}
f(x) \text{ if } |f(x)| \leq n \text{ and } |x| \leq |n| ;\\
n \text{ if }  f(x) > n \text{ and } |x| \leq n; \\
-n \text{ if }  f(x) < -n  \text{ and } |x| \leq |n|;\\
0 \text{ if } |x| > n.
\end{cases} \]
Now each $f_n$ is equal a.e.$[\mu]$ to a bounded Borel function with
bounded support. Thus, by Lemma~\ref{l:conttrans} there is a
subsequence of $\{t_k\}$ for which $T_{t_k}(f_1) $ converges
pointwise a.e.$[\mu]$ to $T_{t_0}(f_1)$, and subsequence of that
subsequence for which $T_{t_k}(f_2)$ converges to $T_{t_0}(f_2),$
and so forth.  The standard diagonal argument gives a subsequence
$S$ for which $T_{t_k}(f_n)$ converges to $T_{t_0}(f_n)$ a.e.$[\mu]$
for all $n$, and hence $T_{t_k}(f)$  converges to $T_{t_0}(f)$ for
$t_k \in S,$ a.e.$[\mu]$.
\end{proof}

\begin{lemma} \label{l:closed}
Let $\mu$ satisfy (H1) and (H2(a)). Let $E$ be a set with
\mbox{$\mu(E) < \infty$}. For any positive $C$, the set
\[ \{ t \in \bbR:\ \mu(E+t) \leq C\mu(E)\} \]
is closed.
\end{lemma}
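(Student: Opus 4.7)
The plan is to show the complement is open by a sequential argument: suppose $\{t_n\}$ is a sequence with $\mu(E+t_n) \leq C\mu(E)$ for all $n$ and $t_n \to t_0$; I want to conclude that $\mu(E+t_0) \leq C\mu(E)$ as well. The natural tool is Fatou's lemma applied to the characteristic functions $\chi_{E+t_n}$, once a.e.\ convergence of a subsequence to $\chi_{E+t_0}$ is available.

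First I would observe that since $\mu(E) < \infty$, the function $\chi_E$ lies in $L^p(\mu)$, so Corollary~\ref{c:conttrans} applies: there is a subsequence $\{t_{n_j}\}$ of $\{t_n\}$ along which $T_{t_{n_j}}(\chi_E)$ converges a.e.$[\mu]$ to $T_{t_0}(\chi_E)$. Since $T_t(\chi_E) = \chi_{E+t}$ (a pointwise identity, with equivalence classes well-defined by hypothesis (H1)), this says $\chi_{E+t_{n_j}} \to \chi_{E+t_0}$ a.e.$[\mu]$.

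Then Fatou's lemma gives
\[ \mu(E + t_0) = \int \chi_{E+t_0} \, d\mu \leq \liminf_{j} \int \chi_{E+t_{n_j}} \, d\mu = \liminf_j \mu(E+t_{n_j}) \leq C\mu(E), \]
where the last inequality uses that every term in the sequence is bounded by $C\mu(E)$. This shows $t_0$ lies in the set, so the set is closed.

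There is no serious obstacle here; the only point to verify is the a.e.\ convergence of a subsequence of $\{\chi_{E+t_n}\}$ to $\chi_{E+t_0}$, and this is exactly the content of Corollary~\ref{c:conttrans} applied to $f = \chi_E \in L^p(\mu)$. Note also that the proof does not require passing to a subsequence of $\{t_n\}$ beforehand (for instance, to arrange that $\mu(E+t_n)$ itself converges), since Fatou's lemma together with the uniform bound $\mu(E+t_{n_j}) \leq C\mu(E)$ is already sufficient.
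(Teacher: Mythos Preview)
Your proof is correct and matches the paper's own argument essentially line for line: pass to a subsequence via Corollary~\ref{c:conttrans} to get a.e.\ convergence of $\chi_{E+t_n}$ to $\chi_{E+t_0}$, then apply Fatou's lemma. Your added remark that $\chi_E \in L^p(\mu)$ because $\mu(E)<\infty$ makes explicit the justification for invoking Corollary~\ref{c:conttrans}, which the paper leaves implicit.
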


\begin{proof}
 Let $\{t_k\} $ be a sequence with
$\mu(E+t_k) \leq C \mu(E)$ and suppose $ t_0 = \lim t_n.$ By
Corollary~\ref{c:conttrans} there is a subsequence such that
$\chi_{E + t_k} $ converges to $\chi_{E+t_0}\ a.e.[\mu].$  Thus,
replacing the sequence by the subsequence and applying Fatou's Lemma
we have
\begin{align*}
 \mu(E+t_0) &= \int \chi_{E+t_0}(x)\, d\mu(x) \\
    &= \int \liminf \chi_{E+t_n}(x)\, d\mu(x) \\
    &\leq \liminf \int \chi_{E+t_n}(x) \, d\mu(x) \\
    &\leq C\mu(E)
 \end{align*}

 \end{proof}

\begin{lemma} \label{l:Baire} Let $\mu$ satisfy (H1) and (H2(a)).
Let $\sN = [a, b],\ a,\ b \in \bbR.$  Then there is a constant $C
> 0$ such that for all $ t \in \sN,$ and for all measureable sets
$E$ with $\mu(E) < \infty$,
\[ \mu(E+t) \leq C\mu(E) .\]
\end{lemma}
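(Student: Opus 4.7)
The plan is to extract a uniform operator bound on $\{T_t\}$ using the Baire Category Theorem and the group structure. Observe that for $E$ with $\mu(E) < \infty$, $\chi_E \in L^p(\mu)$ with $\|\chi_E\|_p = \mu(E)^{1/p}$, and $T_t(\chi_E) = \chi_{E+t}$, so
\[ \mu(E+t)^{1/p} \;=\; \|T_t \chi_E\|_p \;\leq\; \|T_t\|\,\mu(E)^{1/p}.\]
Thus the lemma will follow from a uniform operator-norm bound $\sup_{t \in \sN}\|T_t\| < \infty$, with $C$ taken to be its $p$-th power.

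For $n \in \bbN$, set
\[ F_n \;=\; \{t \in \sN : \|T_t f\|_p \leq n\|f\|_p \text{ for all } f \in L^p(\mu)\} \;=\; \bigcap_{f \in L^p(\mu)}\{t \in \sN : \|T_t f\|_p \leq n\|f\|_p\}.\]
Each set in the intersection is closed in $\sN$: if $t_k \to t_0$ satisfies the inequality, Corollary~\ref{c:conttrans} furnishes a subsequence along which $T_{t_k}(f) \to T_{t_0}(f)$ a.e.$[\mu]$, and Fatou's Lemma then transfers the bound to $t_0$. Hence each $F_n$ is closed in the complete metric space $\sN = [a,b]$. By Lemma~\ref{l:autocont} every $T_t$ is bounded, so $\sN = \bigcup_n F_n$. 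The Baire Category Theorem produces an $N$ and an open interval $(c, d) \subset F_N$ throughout which $\|T_t\| \leq N$.

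To convert this local bound into a uniform bound on all of $\sN$, fix $s_0 \in (c, d)$ and set $K = \|T_{-s_0}\|$, finite by Lemma~\ref{l:autocont}. The group law $T_r = T_{r - s_0} \circ T_{s_0}$ gives $T_{r - s_0} = T_r \circ T_{-s_0}$, so $\|T_t\| \leq NK$ for every $t$ in the open neighborhood $(c - s_0, d - s_0)$ of $0$. Choose $\delta > 0$ with $(-\delta, \delta) \subset (c - s_0, d - s_0)$ and an integer $m$ with $m \delta > \max(|a|, |b|)$, so $t/m \in (-\delta, \delta)$ for every $t \in \sN$. Since $T_t = (T_{t/m})^m$, submultiplicativity of the operator norm yields $\|T_t\| \leq (NK)^m$, and $C = (NK)^{mp}$ serves.

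The principal technical obstacle is the closedness of the $F_n$, which is exactly where the subsequence-convergence produced by Corollary~\ref{c:conttrans} (and hence the Lusin/convergence-in-measure machinery preceding it) is indispensable; without q.t.i.\ the sets $F_n$ need not be closed. After that, the structurally key step is the two-stage extension: first shifting the Baire interval to a neighborhood of $0$ via the group, then covering $\sN$ by dividing $t$ by $m$ and using submultiplicativity, which is what turns the interval-scale estimate from Baire into a global estimate on $\sN$.
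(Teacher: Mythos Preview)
Your proof is correct and follows essentially the same route as the paper: define the level sets $F_n$ (the paper's $A_n$, phrased via measures of sets rather than operator norms), use the a.e.\ convergence from Corollary~\ref{c:conttrans} together with Fatou to get closedness (this is exactly the content of the paper's Lemma~\ref{l:closed}), apply Baire to get a bound on an interval, and then use the group law to propagate that bound first to a neighborhood of $0$ and then to all of $\sN$. The only cosmetic differences are that you work with general $f\in L^p(\mu)$ rather than characteristic functions, and you extend via $T_t=(T_{t/m})^m$ where the paper iterates over a cover $\sN\subset(-kd,kd)$; both are the same submultiplicativity trick.
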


\begin{proof}For $n
= 1, 2, \dots,$ set $A_n = \{ t \in \sN: \ \mu(E+t) \leq n\mu(E) \}$
for all measureable sets $E \subset \bbR$ with $\mu(E) < \infty.$ By
Lemma~\ref{l:closed}, $A_n$ is a closed subset of $\sN.$ Since for
$n \geq ||T_t||,\ t \in A_n$ we have that
\[ \cup_{n=1}^{\infty} A_n = \sN.\]
By the Baire Category Theorem (\cite{Ru2}), some $A_N$ has interior.

Say $(a', b') \subset int A_N.$ If $ d = \frac{b'-a'}{2}$ then we
have
\[ \mu(E+t) \leq C'\mu(E) \] for all $ t \in (-d, d)$, where $C' =
||T_{-d}||C_N.$ Let $k$ be such that $\sN \subset (-kd, kd).$ Then
we can choose the constant $C$ of the Lemma to be $C = (C')^k.$
\end{proof}

\begin{corollary} \label{c:boundedtrans} Let $\mu$ satisfy (H1) and (H2(a)).
Then there is a constant $C_p$ such that
\[ ||T_t(f)||_p \leq C_p||f||_p\]
for all $ t \in \sN,\ f \in L^p(\mu).$
\end{corollary}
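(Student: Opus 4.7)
The plan is to bootstrap the set-theoretic bound of Lemma~\ref{l:Baire} up to an $L^p$-norm bound by approximating general functions with simple functions. First I would record that for any measurable set $E$ with $\mu(E) < \infty$ and any $t \in \sN$, the identity $T_t(\chi_E) = \chi_{E+t}$ combined with Lemma~\ref{l:Baire} gives
\[ \|T_t(\chi_E)\|_p^p = \mu(E+t) \leq C\mu(E) = C\|\chi_E\|_p^p, \]
so $\|T_t(\chi_E)\|_p \leq C^{1/p}\|\chi_E\|_p$.

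Next I would extend this to simple functions. If $f = \sum_{i=1}^k a_i \chi_{E_i}$ with the $E_i$ pairwise disjoint and of finite $\mu$-measure, then $T_t(f) = \sum_i a_i \chi_{E_i + t}$. The sets $E_i + t$ need not be disjoint under our hypotheses, so instead of passing through the representation, I would estimate directly using the elementary inequality $|T_t(f)|^p \leq k^{p-1}\sum_i |a_i|^p \chi_{E_i+t}$ (or more cleanly just $|T_t(f)|^p = \sum_i |a_i|^p\chi_{E_i + t}$ once one restricts to the disjoint case by noting that translation preserves disjointness since $T_t$ is a bijection of $\bbR$). This yields $\|T_t(f)\|_p^p \leq C\|f\|_p^p$ on simple functions.

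Finally, for a general $f \in L^p(\mu)$, choose an increasing sequence of nonnegative simple functions $s_n \nearrow |f|$. Since $T_t$ acts pointwise by a shift in the argument, $T_t(s_n)(x) = s_n(x-t) \nearrow |f|(x-t) = T_t(|f|)(x) = |T_t(f)|(x)$ for every $x$. The monotone convergence theorem (applied on both sides) combined with the simple-function estimate then gives
\[ \|T_t(f)\|_p^p = \lim_{n\to\infty}\|T_t(s_n)\|_p^p \leq C\lim_{n\to\infty}\|s_n\|_p^p = C\|f\|_p^p, \]
so setting $C_p = C^{1/p}$ completes the proof.

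There is no serious obstacle: the nontrivial work was already done in Lemmas~\ref{l:closed} and \ref{l:Baire}, and all that remains is a standard ``characteristic functions $\to$ simple functions $\to$ monotone convergence'' extension. The only point requiring any care is ensuring that the monotone approximation for $|f|$ transfers to $T_t(f)$, which is immediate from the pointwise definition of $T_t$.
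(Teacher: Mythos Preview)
Your proof is correct and follows essentially the same route as the paper: reduce to simple functions with disjoint sets of finite measure, apply Lemma~\ref{l:Baire} termwise, and extend. The only cosmetic difference is that the paper extends from the dense class of simple functions by invoking the continuity of $T_t$ already established in Lemma~\ref{l:autocont}, whereas you extend via monotone convergence; both are standard and equally valid.
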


\begin{proof} It is enough to prove that the conclusion holds for
all $f$ in a dense subset of $L^p(\mu).$  Let $C$ be as in
Lemma~\ref{l:Baire} and let
\[ f = \sum_n a_n \chi_{E_n}\] be a simple function; in particular,
the sum is finite and the $E_n$ are disjoint, $\mu(E_n) < \infty.$
Then
\begin{align*}
||T_t(f)||_p^p &= \sum_n |a_n|^p \mu(E_n+t) \\
    &\leq \sum_n |a_n|^p C \mu(E_n) \\
    &\leq C||f||_p^p
\end{align*}
\end{proof}

Now that we know that the translation operators are uniformly
bounded in a compact neighborhood of $0$, the proof of strong
continuity of translation on $L^p(\mu)$ mimics that on $L^p(m).$

\begin{theorem} \label{t:transcont} Let $\mu$ be a Radon measure satisfying
(H1) and (H2(a)). Then $\{T_t\}$ is a $C_0$-group acting on
$L^p(\mu)$. That is
\[ \lim_{t \to 0} ||T_t(f) - f||_p = 0 \]
for all $ f \in L^p(\mu).$
\end{theorem}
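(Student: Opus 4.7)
The plan is to mimic the standard proof of $L^p$-continuity of translation on $L^p(m)$, using the uniform bound from Corollary~\ref{c:boundedtrans} to replace the isometry property enjoyed in the Lebesgue case. The three ingredients are: (i) density of $C_{00}(\bbR)$ in $L^p(\mu)$, noted in the Background and Notation subsection; (ii) the uniform estimate $\|T_t\| \leq C_p$ for $t$ in a compact neighborhood $\sN$ of $0$; and (iii) uniform continuity of compactly supported continuous functions.

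First I would establish strong continuity at $0$ on the dense subspace $C_{00}(\bbR)$. Fix $g \in C_{00}(\bbR)$ and let $K$ be a compact set containing $\supp(g) + [-1,1]$. For $|t| < 1$ the function $T_t(g) - g$ is supported in $K$, hence
\[ \|T_t(g) - g\|_p^p \leq \mu(K)\,\sup_{x \in \bbR} |g(x-t) - g(x)|^p . \]
Since $\mu$ is a Radon measure we have $\mu(K) < \infty$, and the supremum on the right tends to $0$ as $t \to 0$ by the uniform continuity of $g$. Thus $\|T_t(g) - g\|_p \to 0$.

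Next I would bootstrap to arbitrary $f \in L^p(\mu)$ by a three-term estimate. Given $\ep > 0$, choose $g \in C_{00}(\bbR)$ with $\|f - g\|_p < \ep$. For $t \in \sN$ the triangle inequality, together with Corollary~\ref{c:boundedtrans} applied to $f - g$, yields
\[ \|T_t(f) - f\|_p \leq \|T_t(f-g)\|_p + \|T_t(g) - g\|_p + \|g - f\|_p \leq (C_p + 1)\,\ep + \|T_t(g) - g\|_p . \]
Taking $\limsup$ as $t \to 0$ and then sending $\ep \to 0$ gives $\lim_{t \to 0} \|T_t(f) - f\|_p = 0$, as required.

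There is no serious obstacle left at this stage: the hard technical content has already been absorbed into the preceding lemmas (quasi-translation invariance, Fatou, Baire, and the resulting local uniform bound). The only subtle point is that without isometry we cannot directly pass from a dense subspace to all of $L^p(\mu)$; this is precisely what the uniform estimate $\|T_t\| \leq C_p$ from Corollary~\ref{c:boundedtrans} supplies, making the standard three-term argument go through.
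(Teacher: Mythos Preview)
Your proof is correct and follows essentially the same approach as the paper's: both use density of $C_{00}(\bbR)$, uniform continuity of compactly supported continuous functions to get $\|T_t(g)-g\|_p\to 0$, and the uniform bound from Corollary~\ref{c:boundedtrans} in a three-term triangle inequality. Your write-up is in fact a bit more explicit than the paper's in justifying the step $T_t(g)\to g$ in $L^p(\mu)$ via the compact support and finite $\mu$-measure of $K$.
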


\begin{proof}
Let $C$ and $\sN$ be as in Corollary~\ref{c:boundedtrans}, and let
$f \in L^p(\mu)$ and $\ep > 0$ be given.  Let $g$ be a continuous
function with compact support such that $ ||f - g||_p < \de$ where $
\de$ satisfies  $(C+1)\de < \ep/2.$. Since $T_t(g)$ converges to $g$
uniformly as $t \to 0,$ it also converges in $L^p(\mu).$ Let $
(-\eta, \eta) \subset \sN,$ where $\eta$ is sufficiently small so
that $||T_t(g) - g||_p < \ep/2$.  Then for $|t| < \eta,$
\begin{align*}
||T_t(f) - f||_p &\leq ||T_t(f-g)||_p + ||T_t(g) - g||_p + ||f-g||_p \\
    &\leq C\de + \ep/2 + \de \\
    &< \ep
\end{align*}
\end{proof}

Next we give a negative result for $L^{\infty}(\mu).$ Obviously
translation is not norm continuous on $L^{\infty}(\mu),$ but it is
not even weakly continuous.

\begin{theorem} \label{t:notwklycont} Let $\mu$ be a Radon measure satisfying
(H1) and (H2(b)). Then there exists a bounded function $f \in \sV =
L^{\infty}(\mu)$ and a bounded linear functional $\rho: \sV \to \bbR
$ such that
\[ \lim_{t \to 0} \rho(T_t(f)) \text{ exists but } \neq \rho(f) .\]
In particular, $T_t$ is not weakly continuous on $\sV.$
\end{theorem}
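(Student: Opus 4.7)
The plan is to construct $f$ and $\rho$ explicitly by Hahn--Banach extension of a functional prescribed on the translation orbit of $f$. Take $f = \chi_{(0,\infty)} \in L^\infty(\mu)$, so that $T_t f = \chi_{(t,\infty)}$. The idea is to prescribe $\rho(T_t f) = 1$ for every $t \neq 0$ while forcing $\rho(T_0 f) = 0$; then the limit $\lim_{t \to 0} \rho(T_t f) = 1$ exists trivially and differs from $\rho(f) = 0$.

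First I would verify that the family $\{T_t f : t \in \bbR\}$ is linearly independent in $L^\infty(\mu)$. Any finite linear combination $v = \sum_{i=1}^{n} c_i \chi_{(t_i,\infty)}$ with $t_1 < \cdots < t_n$ is a step function taking the constant value $s_j := c_1 + \cdots + c_j$ on $(t_j, t_{j+1})$ (with $t_{n+1} = +\infty$ and $s_0 = 0$). By Lemma~\ref{l:support}, $\mathrm{supp}\,\mu = \bbR$, so every nonempty open interval has positive $\mu$-measure; hence $v = 0$ in $L^\infty(\mu)$ forces every $s_j = 0$ and thus every $c_i = 0$. Consequently the prescription $\rho_0(T_t f) = 1$ for $t \neq 0$ and $\rho_0(T_0 f) = 0$ determines a well-defined linear functional on $V := \mathrm{span}\{T_t f : t \in \bbR\}$, given on a generic element by $\rho_0(v) = \sum_{i : t_i \neq 0} c_i$.

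Next I would check that $\rho_0$ is bounded in the $L^\infty$-norm on $V$. Using full support of $\mu$ again, each value $s_j$ is attained on a set of positive $\mu$-measure, so $|s_j| \leq \|v\|_\infty$ for $j = 0,1,\dots,n$. In particular $|\sum_{i=1}^n c_i| = |s_n| \leq \|v\|_\infty$, and if $t_{j_0} = 0$ for some index $j_0$ then $|c_{j_0}| = |s_{j_0} - s_{j_0 - 1}| \leq 2\|v\|_\infty$. Combining these estimates yields
\[
 |\rho_0(v)| \;\leq\; \Bigl|\sum_{i=1}^n c_i\Bigr| + |c_{j_0}| \;\leq\; 3\,\|v\|_\infty,
\]
with the second term absent when no $t_i$ vanishes.

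The Hahn--Banach theorem then produces a bounded linear extension $\rho : L^\infty(\mu) \to \bbR$ of $\rho_0$ with $\|\rho\| \leq 3$. By construction $\rho(f) = \rho_0(T_0 f) = 0$ while $\rho(T_t f) = 1$ for every $t \neq 0$, so $\lim_{t \to 0} \rho(T_t f) = 1 \neq 0 = \rho(f)$, establishing the theorem. The main subtle point, beyond the routine appeal to Hahn--Banach, is the boundedness estimate for $\rho_0$: this is the step that crucially uses $\mathrm{supp}\,\mu = \bbR$ from Lemma~\ref{l:support}, since without full support the constants $s_j$ could exceed $\|v\|_\infty$ on $\mu$-null intervals and $\rho_0$ would fail to be continuous.
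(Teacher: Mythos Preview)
Your proof is correct and complete, but the route differs from the paper's. Both arguments use essentially the same test function (the characteristic function of a half-line) and both invoke Lemma~\ref{l:support} for full support; the divergence is in how $\rho$ is manufactured. The paper builds $\rho$ analytically: it picks a point $a$ with $\mu([a,a+\delta))>0$ for all $\delta>0$, forms the normalized averaging functionals $\rho_n(g)=\mu([a,a+\delta_n))^{-1}\int_{[a,a+\delta_n)}g\,d\mu$, and takes a weak-$*$ cluster point via Banach--Alaoglu, obtaining a functional that behaves like evaluation at $a$ from the right. You instead prescribe $\rho_0$ algebraically on the span of translates and extend by Hahn--Banach, the substantive step being the combinatorial bound $\|\rho_0\|\le 3$ from the partial sums $s_j$. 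Your approach avoids Alaoglu and delivers a genuine two-sided limit $t\to 0$ (the paper's argument as written handles only $t\to 0^+$), at the cost of a $\rho$ with no intrinsic description beyond its values on the orbit. The paper's $\rho$, by contrast, is a recognizable object---a generalized point mass---which may be conceptually preferable even if the compactness appeal is slightly heavier machinery.
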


\begin{proof}
\textit{Claim } By Lemma~\ref{l:support}, $\mu$ is a continuous
measure. There exists a point $a \in \bbR$ such that for all $\de >
0,\ \mu([a, a+\de)) > 0.$ Suppose to the contrary. Let $I = [c, d]$
be a closed interval with $\mu(I)
> 0.$ Then for each $x \in \bbR $ there is a $\de_x > 0$ such that
$\mu([x, x+\de_x)] = 0.$  Since $I$ is compact, $I$ can be covered
by finitely many intervals of the form $(x, x+\de_x),\ x \in \bbR,\
$ say $ (x_i, x_i+\de_i),\ 1 = 1, \dots, n.$  But then
\[ 0 < \mu(I) \leq \sum_{i=1}^n \mu(x_i, x_i+\de_i) = 0.\]
This contradiction proves the claim.

Let $a \in \bbR$ be such that $\mu[a, a+\de) > 0 \ \forall \de
> 0,$ and let $ \de_n$ be a sequence decreasing to $0.$ Define a
bounded linear functional $\rho_n$ by
\[ \rho_n(f) = \frac{1}{\mu[a, a+\de_n)}\int_{[a, a+\de_n)} f\,d\mu
.\] Note that $\rho_n $ has norm $1$ on $L^{\infty}(\bbR, \mu).$ By
the Alaoglu Theorem (\cite{Ru1}, 3.15) the set $\{ \rho_n\}$ has a
limit point, say $\rho.$

Let $h = \chi_{[a, \infty)}.$ Then $\rho_n(h) = 1$ for all $n$ so
that $\rho(h) = 1.$  On the other hand, for $ \de_n < t,$
\[ \rho_n(T_t(h)) = 0 \text{ hence } \rho(T_t(h)) = 0.\]
Thus, $\rho(h) \neq \lim_{t\to 0} \rho(T_t(h)),$ completing the
proof.
\end{proof}

\section{The  measure $\mu$}
In this section we characterize those measures $\mu$ for which
$L^p(\mu)$ admits translation operators.

\begin{definition} \label{d:wb} Let $\sB$ be the space of bounded, Borel
functions on $\bbR$ with bounded support. Let $\mu$ be a Radon
measure on $\bbR.$ We say that $T_t \to T_0 = I$ as $ t \to 0$ in
the $wb$-topology if
\[ (T_t(f), g)_{\mu} := \int f(x-t) g(x)\, d\mu(x) \to (f, g)_{\mu}
:= \int f(x) g(x) \,d\mu(x) \] as $ t \to 0$, for all $f,\ g \in
\sB.$
\end{definition}

\begin{remark} If $\{T_t\}$ is a $C_0$-group acting on $L^p(\mu)$
for some $ 1 \leq p < \infty,$ then clearly $T_t \to T_0$ as $t \to
0$ in the wb-topology.
\end{remark}

\begin{lemma} \label{l:wb} With notation as above, $T_t \to T_0 = I$ as $ t \to 0$ in the
$wb$ topology on $\sB$ if and only if $\mu$ is absolutely continuous
with respect to Lebesgue measure $m$.
\end{lemma}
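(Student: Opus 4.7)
The lemma is an equivalence and the two directions use very different ideas. For the easy implication I would assume $\mu\ll m$, write $d\mu=w\,dm$ with the Radon--Nikodym derivative $w\in L^1_{loc}(m)$, and reduce $wb$-convergence for $\mu$ to the classical strong continuity of translation on $L^1(m)$. For the harder implication I would argue by contradiction: assuming $\mu$ has a singular part, I would build a specific pair $(f,g)\in\sB\times\sB$ and a sequence $t_n\to 0$ along which $(T_{t_n}f,g)_\mu\not\to(f,g)_\mu$.

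\textbf{Forward direction.} Fix $f,g\in\sB$ and set $h:=gw$. Then $h\in L^1(m)$ because $g$ is bounded with bounded support and $w$ is locally $m$-integrable. The substitution $y=x-t$ gives
\[
(T_tf,g)_\mu-(f,g)_\mu=\int_\bbR f(y)\bigl(h(y+t)-h(y)\bigr)\,dm(y),
\]
and this is bounded in absolute value by $\|f\|_\infty\,\|T_{-t}h-h\|_{L^1(m)}$, which tends to $0$ as $t\to 0$ by strong continuity of translation on $L^1(m)$.

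\textbf{Reverse direction.} Suppose the $wb$-convergence holds but $\mu\not\ll m$. Since $\mu$ is $\sigma$-finite, there is a bounded Borel set $E$ with $m(E)=0$ and $\mu(E)>0$; take $f=g=\chi_E\in\sB$, so $(f,g)_\mu=\mu(E)>0$. For any $\de>0$, Fubini's theorem yields
\[
\int_{-\de}^{\de}\mu\bigl((E+t)\cap E\bigr)\,dt=\int_\bbR\chi_E(x)\,m\bigl([x-\de,x+\de]\cap E\bigr)\,d\mu(x),
\]
and the inner $m$-integral is bounded by $m(E)=0$, so the whole left side vanishes. Hence $\mu((E+t)\cap E)=0$ for $m$-a.e.\ $t\in(-\de,\de)$, so I can extract $t_n\to 0$ with $(T_{t_n}f,g)_\mu=\mu((E+t_n)\cap E)=0$, contradicting the assumed convergence to $\mu(E)>0$.

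\textbf{Expected obstacle.} The one step that requires care is the Fubini interchange above, but it is harmless since everything involved is bounded and supported in a fixed compact interval. The conceptual heart of the argument is the observation that an $m$-null set, once translated generically near $0$, cannot retain any $\mu$-mass of overlap with itself --- which is precisely what would be needed for $wb$-continuity to force $\mu$-mass on an $m$-null set.
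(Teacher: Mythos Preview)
Your proof is correct. Both directions rest on the same two ideas the paper uses --- Fubini-averaging in $t$ against Lebesgue measure for the implication $wb\Rightarrow\mu\ll m$, and strong continuity of translation on $L^1(m)$ for the converse --- but your packaging is tighter in each case. For $wb\Rightarrow\mu\ll m$, the paper works directly: for an arbitrary $m$-null set $A$ it forms the averages $\lambda_\ep(A)=\frac{1}{\ep}\int_0^\ep\mu((A\cap[-n,n])-t)\,dt$, shows by Fubini that $\lambda_\ep(A)=0$, and then invokes $wb$-continuity to pass to the limit $\mu_{0,n}(A)$. Your contrapositive with $f=g=\chi_E$ is the same Fubini computation specialized to the intersection $\mu((E+t)\cap E)$; it is a bit sharper because choosing $g=\chi_E$ rather than a window $\chi_{[-n-1,n+1]}$ makes the vanishing immediate. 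For $\mu\ll m\Rightarrow wb$, the paper truncates $h=\frac{d\mu}{dm}$ at a large level $N$, splits the integral over $\{h>N\}$ and $\{h\le N\}$, and applies $L^1(m)$-continuity to $T_tf$. Your substitution $y=x-t$ transfers the translation from $f$ to $gw\in L^1(m)$ and dispenses with the truncation entirely; this is a genuine simplification. The only point to state explicitly is that a bounded $E$ with $m(E)=0$ and $\mu(E)>0$ exists: intersect any $m$-null set of positive $\mu$-measure with $[-n,n]$ for large $n$, using that $\mu$ is Radon.
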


\begin{proof} Assume that $(T_t(f), g)_{\mu} \to (f, g)_{\mu}$ as $
t \to 0$ for all $ f,\ g \in \sB.$ We need to show that $\mu $ is
absolutely continuous with respect to Lebesgue measure.

Fix $ 0 < n < \infty,$ and let $\mu_{t, n}(A) = \mu(A\cap [-n, n] -
t)$ for $ A \in \fB,$ the $\si$-algebra of Borel sets.  Take $f =
\chi_{A\cap [-n, n]},\ g = \chi_{[-n-1, n+1]}.$ Note that $ f,\ g
\in \sB,$ and that for all $ A \in \fB,$
\begin{equation} \label{eq:2}
 \mu_{t, n}(A) = \int \chi_{A\cap [-n, n]}(x+t) \chi_{[-n-1,
n+1]}(x)\,d\mu(x) \to \mu_{0, n}(A)
\end{equation}
 as $ t \to 0.$

For $\ep > 0,$ set
\[ \la_{\ep}(A) = \frac{1}{\ep}\int_0^{\ep} \mu_{t, n}(A)\,dt,\ A
\in \fB.\]  By (\ref{eq:2}),
\begin{equation} \label{eq:3}
 \la_{\ep}(A) \to \mu_{0, n}(A) \text{ as } \ep \to 0
 \end{equation}
By Fubini's Theorem,
\begin{align*}
\la_{\ep}(A) &= \frac{1}{\ep}\int_0^{\ep}(\int_{\bbR} \chi_{A\cap
[-n, n] -t}(x)\, d\mu(x))\,dt \\
    &= \int_{\bbR}(\frac{1}{\ep}\int_0^{\ep} \chi_{A\cap [-n, n]
    -t}(x)\,dt) \,d\mu(x)
\end{align*}
If $m(A) = 0$ then for all $\ep > 0, \ x \in \bbR,\ \int_0^{\ep}
\chi_{A\cap [-n, n] -t}(x)\,dt = 0$ since $ m(A\cap [-n, n] -t) =
m(A\cap [-n, n]) \leq m(A) = 0$ using translation invariance of $m$.
Thus, $\la_{\ep}(A) = 0 \ \forall \ep > 0.$

By (\ref{eq:3}), $ \mu_{0, n}(A) = 0,$ for all $n.$  Since $\mu(A) =
\lim_n \mu_{0, n}(A),$ it follows $\mu(A) = 0.$  Thus $ \mu << m.$

Now suppose that $\mu$ is absolutely continuous with respect to $m$.
We need to show that $T_t$ is continuous at $0$ in the
$wb$-topology. Fix $f, \ g \in \sB$ and let $ h = \frac{d\mu}{dm}$
be the Radon-Nikodym derivative.

Let $M $ be such that the supports of $f,\ T_t(f),\ g$ are all
contained in $[-M, M]$ for $ |t| < 1.$  Let $\ep > 0$ be given.
Clearly,
\[ \int_{|x| \leq M} h(x)\, dm < \infty,\text{ implies } \lim_{N\to \infty} \int_{
|x|\leq M, \ h(x) > N} h(x) \, dm = 0.\] Thus, there exists $N $
such that
\[ \int_{h(x) > N,\ |x| \leq M} h(x) < \ep.\]

Since $T_t(f) \to f$ as $t \to 0$ in $L^1(m)$, there is a $\de > 0$
such that for $|t| < \de,$
\[ \int |f(x-t) - f(x)|\,dm(x) < \ep/N .\]

So for $|t| < \de,$
\begin{multline*}
|(T_t(f), g)_{\mu} - (f, g)_{\mu}| \leq \int_{ h(x) > N,\ |x| \leq
M} |f(x-t) - f(x)| |g(x)| h(x) \,dm(x) \\ + N||g||_{\infty}\int
|f(x-t) - f(x)|\,dm(x)
\\
    \leq 2||f||_{\infty} ||g||_{\infty} \int_{h(x) > N,\ |x| \leq
    M} h(x)\,dm(x) + N||g||_{\infty} \int |f(x-t) - f(x)|\,dm(x) \\
   \leq 2||f||_{\infty} ||g||_{\infty} \ep +  ||g||_{\infty} \ep
\end{multline*}
This shows that $T_t$ is $wb$-continuous at $ t = 0$, completing the
proof.

\end{proof}

\begin{lemma} \label{l:intervals} Let $\mu$ be a continuous Radon measure on the
$\si$-algebra of Borel sets. Let $E$ be a Borel set with $\mu(E) <
\infty$ and $ 0 < m(E) < \infty.$  Let $\sO$ be an open set with $ E
\subset \sO$ and $m(\sO)$ finite. Write $\sO = \cup_i (a_i, b_i)$,
where the $(a_i, b_i)$ are disjoint intervals. Then there exits $i$
such that
\[\mu((a_i, b_i)) \geq \frac{\mu(\sO)}{m(\sO)}\,(b_i - a_i) ,\] and
there exists $i$ such that
\[ \mu((a_i, b_i)) \leq \frac{\mu(\sO)}{m(\sO)}\, (b_i - a_i) .\]
In particular, if $\mu(E) = 0,$ then given $\ep > 0,\ \sO$ can be
chosen so that
\[ \mu((a_i, b_i)) < \ep\, (b_i - a_i) \]
for some $i$.
\end{lemma}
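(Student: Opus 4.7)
The plan is to apply an averaging argument to the component decomposition of $\sO$. By the standard structure theorem for open subsets of $\bbR$, we may write $\sO = \bigsqcup_i (a_i, b_i)$ as a countable disjoint union of open intervals. Since $\mu$ is continuous, the endpoints $\{a_i\},\{b_i\}$ carry no $\mu$-measure, so countable additivity gives
\[ \mu(\sO) = \sum_i \mu((a_i, b_i)), \qquad m(\sO) = \sum_i (b_i - a_i). \]

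For the first inequality I may assume $0 < \mu(\sO) < \infty$ (the case $\mu(\sO) = 0$ is trivial, and when $\mu(\sO) = \infty$ at least one component must carry infinite $\mu$-measure once the ratio $\mu(\sO)/m(\sO)$ is read in the extended reals). Suppose toward a contradiction that $\mu((a_i, b_i)) < \frac{\mu(\sO)}{m(\sO)}(b_i - a_i)$ for every $i$. Summing over $i$ yields the strict inequality $\mu(\sO) < \mu(\sO)$, which is absurd. The second inequality is obtained by the identical argument with all inequalities reversed.

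For the ``in particular'' clause I exploit outer regularity of the two Radon measures. Using outer regularity of $m$, pick an open $\sO_1 \supset E$ with $m(\sO_1) < \infty$; using outer regularity of $\mu$ together with the hypothesis $\mu(E) = 0$, pick an open $\sO_2 \supset E$ with $\mu(\sO_2) < \ep\, m(E)$. Set $\sO := \sO_1 \cap \sO_2$. Then $\sO$ is open, contains $E$, has $m(\sO) < \infty$, and satisfies
\[ \frac{\mu(\sO)}{m(\sO)} \;\leq\; \frac{\mu(\sO_2)}{m(E)} \;<\; \ep. \]
Applying the upper-bound half already proved to this $\sO$ produces an index $i$ with
\[ \mu((a_i, b_i)) \;\leq\; \frac{\mu(\sO)}{m(\sO)}\,(b_i - a_i) \;<\; \ep\,(b_i - a_i). \]

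I do not foresee a genuine obstacle: the core of the argument is the pigeonhole/averaging principle applied to a countable decomposition, together with outer regularity for the last clause. The only delicate point is the routine bookkeeping needed to handle the degenerate values $\mu(\sO) = 0$ and $\mu(\sO) = \infty$, both of which are immediate.
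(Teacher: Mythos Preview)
Your argument is essentially identical to the paper's: both proofs suppose the desired inequality fails for every $i$, sum over the components, and arrive at the contradiction $\mu(\sO) < \mu(\sO)$; for the final clause the paper simply invokes regularity of $\mu$, while you spell out the construction of $\sO$ explicitly. One small slip: your aside that when $\mu(\sO)=\infty$ ``at least one component must carry infinite $\mu$-measure'' is false in general (countably many finite terms can sum to $\infty$), but this degenerate case is never used in the paper and the statement is really only meant for $\mu(\sO)<\infty$.
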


\begin{proof} Let $I_i = (a_i, b_i).$ Suppose the first
statement fails; then
 \[\text{ for all } i,\ \mu(I_i)  < \frac{\mu(\sO)}{m(\sO)}\,m(I_i).\] Hence,
\begin{align*}
\mu(\sO) &= \sum_i \mu(I_i) \\
    &< \sum_i \frac{\mu(\sO)}{m(\sO)}\,m(I_i) \\
    &< \mu(\sO)\frac{m(\sO)}{m(\sO)} \\
    &< \mu(\sO)
\end{align*}
a contradiction.

The proof of the second assertion is similar, and is omitted. The
final statement follows from the regularity of the measure $\mu.$
\end{proof}

Recall two measures $\mu$ and $\nu$ are \emph{equivalent} if $\mu$
is absolutely continuous with respect to $\nu,$ and $\nu$ is
absolutely continuous with respect to $\mu.$

\begin{lemma} \label{l:zeromeas} Suppose that $\mu$ is a nonzero Radon measure which is q.t.i.
Then $\mu$ is equivalent to Lebesgue measure.
\end{lemma}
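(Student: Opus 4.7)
The plan is to establish $\mu \sim m$ by proving the two inclusions $\mu \ll m$ and $m \ll \mu$ separately. First I would use the Lebesgue decomposition of $\mu$ relative to $m$ together with a Tonelli identity to rule out any singular part; second, I would unpack q.t.i.\ through the Radon--Nikodym derivative and reduce $m \ll \mu$ to the translation-ergodicity of $(\bbR, m)$.

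For $\mu \ll m$, write $\mu = \mu_{ac} + \mu_s$ with $\mu_s \perp m$, and suppose for contradiction that $\mu_s \neq 0$. Since $\mu_s$ is Radon and singular to $m$, I can find a bounded Borel set $N$ with $m(N) = 0$ and $0 < \mu_s(N) < \infty$. Then $\mu(N) = \mu_s(N) > 0$, so q.t.i.\ forces $\mu(N + t) > 0$ for every $t \in \bbR$; combined with $m(N + t) = 0$, this gives $\mu_s(N + t) > 0$ for every $t$. I would then apply Tonelli to the nonnegative Borel function $(x, y) \mapsto \chi_N(x + y)$ on $(\bbR^2, m \times \mu_s)$: integrating in $x$ first, translation invariance of $m$ gives $\int m(N - y)\, d\mu_s(y) = 0$, while integrating in $y$ first gives $\int \mu_s(N - x)\, dm(x)$, whose integrand is strictly positive on all of $\bbR$, forcing the integral to be strictly positive. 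The contradiction yields $\mu_s = 0$, hence $\mu \ll m$.

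For $m \ll \mu$, let $h = d\mu/dm$ and $Z = \{h = 0\}$. The identity $\mu(B) = \int_B h\, dm$ translates q.t.i.\ of $\mu$ into the statement: for every Borel $B$ and every $t \in \bbR$, $m(B \cap Z^c) = 0$ iff $m(B \cap (Z^c - t)) = 0$. Taking $B = Z$ and $B = Z - t$ yields $m(Z \triangle (Z + t)) = 0$ for every $t$, so $Z$ is translation-invariant modulo $m$-null sets. A Lebesgue-density argument on the line then forces $Z$ to be either $m$-null or $m$-conull; the conull case would imply $\mu = 0$, which is excluded, so $m(Z) = 0$. Thus $h > 0$ $m$-a.e., and so $\mu(B) = 0$ forces $m(B) = m(B \cap Z^c) = 0$, i.e., $m \ll \mu$.

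The main obstacle I anticipate is the Tonelli step in the first part: I must choose $N$ carefully so that Fubini--Tonelli applies in this $\sigma$-finite (but not finite) setting, and then convert pointwise positivity of $x \mapsto \mu_s(N - x)$ on all of $\bbR$ into strict positivity of its integral against $m$. The remaining ingredients---the existence of the Lebesgue decomposition, the vanishing of $\mu_{ac}$ on $m$-null sets, and the translation-ergodicity of Lebesgue measure---are standard.
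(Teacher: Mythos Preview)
Your argument is correct, and your anticipated obstacle is not one: both $m$ and $\mu_s$ are $\sigma$-finite (the latter because $\mu_s \leq \mu$ and $\mu$ is Radon), the integrand $\chi_N(x+y)$ is nonnegative Borel, and a measurable function that is strictly positive everywhere has strictly positive $m$-integral. So the Tonelli step goes through cleanly.

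Your route differs from the paper's in the first half. For $\mu \ll m$ the paper does not use the Lebesgue decomposition at all; instead it invokes its Lemma~\ref{l:conttrans} (q.t.i.\ implies $T_{t_n}(g) \to g$ in $\mu$-measure for bounded, boundedly supported $g$) to verify the ``wb-continuity'' hypothesis of Lemma~\ref{l:wb}, and that lemma in turn produces $\mu \ll m$ via an averaging argument $\lambda_\epsilon(A) = \epsilon^{-1}\int_0^\epsilon \mu_{t,n}(A)\,dt$. Your Tonelli trick with $m \times \mu_s$ is more direct and self-contained: it bypasses Lusin's theorem, convergence in measure, and the whole wb-topology apparatus, at the cost of not yielding the wb-continuity statement as a byproduct. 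For $m \ll \mu$ the two arguments are essentially the same---both show the zero set $Z = \{h = 0\}$ of $h = d\mu/dm$ is translation-invariant modulo $m$-null sets and then appeal to the triviality of such sets; the paper phrases this last step via uniqueness of Haar measure (defining $\nu(E) = m(E \cap Z)$ and showing $\nu$ is translation-invariant), while you phrase it via Lebesgue density / ergodicity. These are interchangeable.
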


\begin{proof}
Let $\{ t_n\}$ be a sequence in $\bbR$ which converges to $0$, and
$f,\ g$ bounded Borel functions with bounded support. Since
$\{T_{t_n}(f)\}$ converges in measure to $f$ by
Lemma~\ref{l:conttrans}, then in the notation of
Definition~\ref{d:wb},
\[ (T_{t_n}(f), g)_{\mu} \to (f, g)_{\mu} \]
so that $T_t \to I$ in the wb-topology as $ t \to 0.$ Thus by
Lemma~\ref{l:wb}, $\mu << m.$

Let $h = \frac{d\mu}{dm}$, and set $ F = \{ x:\ h(x) = 0 \}.$  Since
$\mu $ is q.t.i., $F$ is translation invariant. Define a Radon
measure $\nu$ by
\[ \nu(E) = m(E\cap F) .\]
Then
\begin{align*}
\nu(E + t) &= m((E + t)\cap F) \\
    &= m((E+t)\cap (F+t)) \\
    &= m(E \cap F + t) \\
    &= m(E \cap F) \\
    &= \nu(E)
\end{align*}

As Lebesgue measure is the unique translation-invariant measure, up
to scalar multiple, if $\nu$ is nonzero there is a positive constant
$c$ such that $ \nu(E) = c\,m(E)$ for any Borel set $E$. In
particular,
\[ \nu(F^c) = m(F\cap F^c) = 0\]
and hence $m(F^c) = 0.$  This implies that $\mu$ is the zero
measure, contrary to assumption.  Thus it must be that $\nu$ is the
zero measure, so that $m(F) = 0.$  But then $ m << \mu .$
\end{proof}

\begin{theorem} \label{t:linfty} Let $\mu$ be a Radon measure on $\bbR.$
Then the following conditions are equivalent:
\begin{enumerate}
\item $\mu$ is q.t.i.;
\item $\{T_t\}_{t \i \bbR}$ acts as a group of linear operators on $L^{\infty}(\mu);$
\item $\{T_t\}$ acts as a group of isometries on $L^{\infty}(\mu);$
\item $\mu$ is equivalent to Lebesgue measure.
\end{enumerate}
\end{theorem}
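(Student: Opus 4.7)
The plan is to arrange the four conditions in a cyclic chain $(1) \Rightarrow (4) \Rightarrow (3) \Rightarrow (2) \Rightarrow (1)$, exploiting the fact that almost all the substantive work has already been done.

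First I would handle $(1) \Rightarrow (4)$ by a direct appeal to Lemma~\ref{l:zeromeas}, which is precisely the statement that a nonzero q.t.i.\ Radon measure is equivalent to Lebesgue measure; if $\mu = 0$ the statement is vacuous. This is the only implication whose content lies outside the present section.

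Next, for $(4) \Rightarrow (3)$, the key observation is that equivalence $\mu \sim m$ means $\mu$ and $m$ share the same null sets, so that for every Borel function $f$ the essential supremum norms $\|f\|_{L^{\infty}(\mu)}$ and $\|f\|_{L^{\infty}(m)}$ coincide. Combined with the translation invariance of Lebesgue measure, this gives $\|T_t f\|_{L^{\infty}(\mu)} = \|T_t f\|_{L^{\infty}(m)} = \|f\|_{L^{\infty}(m)} = \|f\|_{L^{\infty}(\mu)}$, so each $T_t$ is a norm-preserving linear bijection of $L^{\infty}(\mu)$; linearity and the group law $T_s \circ T_t = T_{s+t}$ are automatic. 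The implication $(3) \Rightarrow (2)$ is immediate, since in this context an \emph{isometry} is by convention a linear, norm-preserving operator, so condition (3) contains all of (2).

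For $(2) \Rightarrow (1)$ I would simply reproduce the argument already recorded in Remark~\ref{r:group}: if some Borel set $E$ had $\mu(E) > 0$ and $\mu(E+t) = 0$, then $\chi_E$ would be a nonzero element of $L^{\infty}(\mu)$ while $T_t(\chi_E) = \chi_{E+t}$ is the zero element, and applying $T_{-t}$ together with the group law $T_{-t} \circ T_t = T_0 = I$ would give $\chi_E = 0$ in $L^{\infty}(\mu)$, a contradiction. The symmetric direction (null sets going the other way) follows by replacing $t$ with $-t$, so $\mu$ is q.t.i.

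There is no serious obstacle: the theorem is essentially a bookkeeping synthesis of Lemma~\ref{l:zeromeas} together with the trivial properties of equivalent measures. The only point of mild care is in $(4) \Rightarrow (3)$, where one must explicitly invoke that the $L^{\infty}$ norm really is the same for equivalent measures—a standard but easily glossed-over fact.
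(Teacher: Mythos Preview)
Your proof is correct and relies on the same two ingredients as the paper---Lemma~\ref{l:zeromeas} for the equivalence of (1) and (4), and the argument of Remark~\ref{r:group} for $(2)\Rightarrow(1)$. The only cosmetic difference is that the paper proves $(1)\Rightarrow(3)$ directly (checking on simple functions that q.t.i.\ preserves the $L^\infty$ norm under translation), whereas you route through (4) and use the identification $L^\infty(\mu)=L^\infty(m)$ for equivalent measures; your cyclic arrangement is arguably tidier, but the substance is identical.
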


\begin{proof} Suppose $\mu$ is q.t.i. and let  $f \in L^{\infty}(\mu),\ f = \sum a_k E_k$ be
a finite sum, with the sets $E_k$ pairwise disjoint and of positive
or infinite $\mu$-measure.  By hypothesis, the sets $E+t$ have
positive or infinite $\mu$-measure, for all $t \in \bbR.$  Thus,
$T_t(f) \in L^{\infty}(\mu)$ and $||T_t(f)||_{\infty} =
||f||_{\infty}.$ Thus, $T_t$ is an isometry on a dense subspace of
$L^{\infty}(\mu),$ hence is an isometry on $L^{\infty}(\mu).$
Furthermore the $\{T_t\}$ act as a group (cf Remark~\ref{r:group}).
This shows that (1) implies (3), and clearly (3) implies (2). Also
by Remark~\ref{r:group}, q.t.i. is equivalent to (2).

Lemma~\ref{l:zeromeas} proves the equivalence of (1) and (4).
\end{proof}

\begin{remark} Recall hypothesis (H1) for $\mu$ is equivalent to
$\mu$ being q.t.i. (Lemma~\ref{l:equivclass}).  The theorem now
shows that (H1) is equivalent to (H2(b)).
\end{remark}

\begin{notation} If $h$ is a Lebesgue measureable function, and $I$
an interval in $\bbR,$ write $||h||_{I, \infty}$ to denote the
essential supremum of $h$ on $I$.
\end{notation}

\begin{theorem} \label{t:transmeas}  Let $\mu$ be a Radon measure on
$\bbR.$ Then $\mu$ satisfies (H1) and (H2(a)) if and only if $\mu$
is equivalent to Lebesgue measure, and the Radon-Nikodym derivative
$\frac{d\mu}{dm} := h$ satisfies: For each real number $L
> 0$ there is a constant $C_L$ such that if $I$ is any interval of
length $L$, then
\[ ||h||_{I,\infty}\,||1/h||_{I,\infty} \leq C_L .\]
\end{theorem}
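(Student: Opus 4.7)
The plan is to translate between the operator-theoretic statement on $L^p(\mu)$ and a pointwise statement about the Radon-Nikodym derivative $h = d\mu/dm$, in both directions.

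For the forward direction, assume $\mu$ satisfies (H1) and (H2(a)). By Lemma~\ref{l:equivclass} together with Lemma~\ref{l:zeromeas}, $\mu \sim m$, so $h = d\mu/dm$ exists and is positive a.e. Fixing $L > 0$, I apply Lemma~\ref{l:Baire} to the neighborhood $\sN = [-L, L]$ to obtain a constant $C = C_L$ with $\mu(E+t) \leq C\mu(E)$ for every $t \in \sN$ and every Borel $E$ with $\mu(E) < \infty$. Rewriting each side via $h$ and substituting $y = x - t$ gives $\int_E h(y+t)\,dm(y) \leq C \int_E h(y)\,dm(y)$ for every bounded Borel $E$. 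Taking $E = \{y \in [-n,n] : h(y+t) > C h(y)\}$ shows this set is $m$-null, and letting $n \to \infty$ yields, for each fixed $t \in [-L, L]$, the pointwise comparison $h(y+t) \leq C h(y)$ a.e.\ in $y$.

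To pass from this to the required oscillation bound on intervals, I would apply Fubini's theorem to the set $S = \{(y,t) \in \bbR \times [-L,L] : h(y+t) > C h(y)\}$: each $t$-section is null, so $S$ has zero planar measure, and interchanging the order of integration shows that a.e.\ $y$-section of $S$ is also null. The substitution $y' = y + t$ translates this into the statement that for a.e.\ $y \in \bbR$ and a.e.\ $y' \in [y-L, y+L]$, $h(y') \leq C h(y)$. For any interval $I$ of length $L$, one has $I \subset [y-L, y+L]$ whenever $y \in I$, hence $||h||_{I,\infty} \leq C h(y)$ for a.e.\ $y \in I$, and taking the essential infimum in $y$ yields the oscillation inequality $||h||_{I,\infty}\,||1/h||_{I,\infty} \leq C_L$.

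For the converse, assume $\mu \sim m$ with $h$ satisfying the oscillation condition. Since $||1/h||_{I,\infty} < \infty$ on every bounded interval, $h > 0$ a.e., so $\mu(E) = 0 \iff m(E) = 0$, which is a translation-invariant condition, giving (H1). For (H2(a)), given $t \neq 0$, set $L = 2|t|$ and cover $\bbR$ by a countable family of length-$L$ intervals $\{I_n\}$ arranged so that every $y \in \bbR$ lies together with $y+t$ in some $I_n$. The oscillation bound on each $I_n$ produces $h(y+t) \leq C_L h(y)$ on a full-measure subset of those $y \in I_n$ with $y + t \in I_n$; a countable union of null sets is null, so $h(y+t) \leq C_L h(y)$ a.e.\ on $\bbR$, and a change of variables gives $\int |T_t f|^p\, d\mu = \int |f(y)|^p h(y+t)\,dm(y) \leq C_L \int |f|^p\, d\mu < \infty$. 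The main technical obstacle is in the forward direction: Lemma~\ref{l:Baire} supplies pointwise control $h(y+t) \leq Ch(y)$ only for each $t$ separately, with an exceptional null set that could depend on $t$, and the Fubini maneuver above is what converts this per-slice information into a genuine two-variable inequality capable of yielding a uniform oscillation estimate on a fixed interval.
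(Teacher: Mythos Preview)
Your argument is correct in both directions, but the forward direction takes a genuinely different route from the paper's. The paper proceeds geometrically: given an interval $I$ of length $L$, it fixes $M < \|h\|_{I,\infty}$ and $K < \|1/h\|_{I,\infty}$, uses Lemma~\ref{l:intervals} to locate a small subinterval $I'$ on which $\mu$ is large relative to $m$ (coming from the set $\{h \geq M\}$) and an even smaller subinterval $J$ on which $\mu$ is small relative to $m$ (coming from $\{1/h \geq K\}$), covers $I'$ by roughly $2m(I')/m(J)$ translates of $J$ with shifts bounded by $L+2$, and then invokes Corollary~\ref{c:boundedtrans} on those translates to force $MK \leq 2C$. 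Your approach bypasses this interval-selection machinery entirely: you go straight from Lemma~\ref{l:Baire} to the pointwise inequality $h(y+t) \leq C h(y)$ a.e.\ for each fixed $t \in [-L,L]$, and then use Fubini on $\bbR \times [-L,L]$ to swap the quantifiers and obtain, for a.e.\ $y$, the bound $h(y') \leq C h(y)$ for a.e.\ $y' \in [y-L, y+L]$, from which the oscillation estimate follows with constant $C$ rather than $2C$. Your method is shorter, avoids Lemma~\ref{l:intervals}, and gives a slightly sharper constant; the paper's argument is more hands-on and makes the interplay between $\mu$ and $m$ on small intervals explicit. For the converse the two arguments are essentially the same idea in different clothing: the paper works at the level of sets (if $E$ and $E+t$ sit in a common interval of length $L$ then $\mu(E+t) \leq C_L\mu(E)$), while you phrase it pointwise as $h(y+t) \leq C_L h(y)$ a.e.
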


\begin{proof} Suppose $T_t$ maps $L^p(\mu)$ to itself, for all $t \in \bbR.$
Let $L > 0$ be given, and let $I = [a, b]$ be an interval of length
$L$.

\textit{A fortiori} there is no reason why $||h||_{I, \infty}$ could
not be infinite, or similarly why  $||1/h||_{I, \infty}$ could not
be infinite. From Lemma~\ref{l:zeromeas} we know that $\mu$ is
equivalent to Lebesgue measure, and hence $||h||_{I, \infty}$ cannot
be zero, and $||1/h||_{I,\infty}$ cannot be zero. We will show in
fact that both are finite. Let $M$ and $K$ be constants such that
\[ 0 < M < ||h||_{I,\infty} \]
and
\[ 0 < K < ||1/h||_{I, \infty} .\]

Let $E = \{ x \in I:\ h(x) \geq  M\}$ and $F = \{ x \in I:\ 1/h(x)
\geq K \}.$ By definition of essential supremum, both of these sets
have positive Lebesgue measure, not exceeding $L = m(I).$ Let $\sO$
be an open cover of $E$ and $\sP$ an open cover of $F$, so that both
$\sO,\ \sP$ are contained in the interval $ [a-1, b+1].$
Furthermore, given $\ep
> 0,$ we can require that $m(\sO) < m(E) + \ep$ and $\mu(\sP) < \mu(F) + \ep.$

If $\sO = \cup_i I_i$ where the $I_i$ are pairwise disjoint, open,
then by Lemma~\ref{l:intervals} there is an interval $I' = I_i$ for
which $ \mu(I') \geq \frac{\mu(\sO)}{m(\sO)}\,m(I').$ Hence,
\begin{align*}
\mu(I') &\geq \frac{\mu(E)}{m(\sO)}m(I') \\
    &\geq \frac{M\,m(E)}{m(E)+\ep}\,m(I')
\end{align*}

Writing $\sP$ as the disjoint union of pairwise disjoint open sets,
there is, by the same Lemma, an interval $J$ in $\sP$ for which
$\mu(J) \leq \frac{\mu(\sP)}{m(\sP)}\,m(J).$ Furthermore, we may
assume the length of $J$ is less than the length of $I'$. Indeed, if
this is not the case then bisect $J$ so $J = J_1 \cup J_2.$ Then
$J_i $ satisfies $J_i \subset \sP$ and $\mu(J_i) <
\frac{\mu(\sP)}{m(\sP)}\, m(J_i)$ for at least one of $i = 1, \ 2.$
Continue bisecting until an interval is obtained with length less
than $I'.$ Rename that interval $J.$

Hence,
\begin{align*}
\mu(J) &\leq \frac{\mu(F) + \ep}{m(F)}\,m(J) \\
    &\leq \frac{K^{-1}\,m(F) + \ep}{m(F)}\,m(J)
\end{align*}

Cover $I'$ by finitely many translates of $J$,
\[ I' \subset \cup_{k=1}^n (J+ t_k) \]
where $m(\cup_k (J+t_k)) = n\,m(J) \leq 2 m(I').$  Also, since the
intervals $I',\ J$ are subsets of an interval of length $L+2,$ we
have that $ |t_k| \leq L+2,\ 1 \leq k \leq n.$

Let $C$ be the constant in Corollary~\ref{c:boundedtrans}
corresponding to the neighborhood $[L-2, L+2].$ Then
\[ \mu(J+t_k) \leq C\mu(J),\ 1 \leq k \leq n.\]
Hence,
\begin{align*}
\frac{m(E)}{m(E)+\ep}\,M m(I') &\leq \mu(I') \\
    &\leq \mu(\cup_{k=1}^n J + t_k ) \\
    &\leq n\,C\,\mu(J) \\
    &\leq C\,\frac{K^{-1}\,m(F)+\ep}{m(F)}\,
    n\,m(J)\\
    &\leq
    C\,\frac{K^{-1}\,m(F)+\ep}{m(F)}\,2\,m(I')
\end{align*}
Cancelling $m(I')$ from both sides of the inequality, we obtain
\[ \frac{m(E)}{m(E)+\ep}\,M\,\frac{m(F)}{K^{-1}\,m(F) +\ep} \leq 2C .\]
Since this is true for all $\ep > 0$ we conclude
\[ M\, K \leq 2C .\]
However, $M$ was arbitrary in the interval $ (0, ||h||_{I,\infty})$
and $K$ was arbitrary in the interval $(0, ||1/h||_{I,\infty})$ so
it follows that
\[ ||h||_{I, \infty}\, ||1/h||_{I, \infty} \leq 2C .\]
Thus, we can take the constant $C_L = 2C.$

For the converse, suppose the Radon-Nikodym derivative $h =
\frac{d\mu}{dm}$ satisfies the condition of the Theorem. It is
enough to show that each translation operator $T_t$ is a bounded
operator on $L^p(\mu),\ 1 \leq p < \infty.$ So fix $t \in \bbR$ and
let $ L = 2|t|.$ So if $E$ is any measurable subset of $\bbR$
contained in an interval of length $L/2$ then both $E,\ E+t$ are
contained in an interval of length $L$. Hence, $\mu(E+t) \leq
C_L\,\mu(E).$ Let $f$ be a simple function with compact support in
$L^p(\mu)$ and write
\[ f = \sum_k a_k\, E_k \]
where each $E_k$ is contained in an interval of length $L/2.$ Then
$f \in L^p(\mu)$. Also note that the set of such functions is dense
in $L^p(\mu)$ for $ 1 \leq p < \infty.$ Then
\begin{align*}
{||T_t(f)||_p}^p &= \sum_k |a_k|^p \mu(E_k + t) \\
    &\leq \sum_k |a_k|^p C_L \mu(E_k) \\
    &\leq C_L {||f||_p}^p
\end{align*}
Since $T_t$ is bounded by ${C_L}^{\frac{1}{p}}$ on a dense subset,
it is bounded on $L^p(\mu).$
\end{proof}

\begin{corollary} \label{c:allp} If $\mu$ is a Radon measure on
$\bbR$ and if the group $\{T_t\}$ maps $L^p(\mu)$ into itself for
some $p, \ 1 \leq p < \infty,$  then $\{T_t\}$ maps $L^p(\mu)$ into
itself for all $p, \ 1 \leq p \leq \infty.$
\end{corollary}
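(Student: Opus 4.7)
The plan is to exploit the fact that the necessary-and-sufficient condition in Theorem~\ref{t:transmeas} is expressed purely in terms of the measure $\mu$ (via $h = d\mu/dm$) with no reference to the exponent $p$. Thus once we know the condition holds for one $p$, it holds for all finite $p$, and the $p = \infty$ case follows from Theorem~\ref{t:linfty} because equivalence to Lebesgue measure forces q.t.i.

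In more detail, suppose $\{T_t\}$ maps $L^p(\mu)$ into itself for some fixed $p \in [1,\infty)$. This is precisely hypothesis (H2(a)) for $\mu$ at that exponent, and by Remark~\ref{r:equivalentcond} it also forces (H1). Theorem~\ref{t:transmeas} then gives two conclusions that do not involve $p$: first, $\mu$ is equivalent to Lebesgue measure $m$; second, the Radon--Nikodym derivative $h = d\mu/dm$ satisfies $\|h\|_{I,\infty}\,\|1/h\|_{I,\infty} \leq C_L$ for every interval $I$ of length $L$, with constants $C_L$ independent of $p$.

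Now fix any other $p' \in [1,\infty)$. Since $h$ still obeys the same interval-bound, the converse half of Theorem~\ref{t:transmeas} applies verbatim to the exponent $p'$, and yields (H1) together with (H2(a)) for $p'$. In particular $T_t$ maps $L^{p'}(\mu)$ into itself for every $t \in \bbR$. For the remaining case $p' = \infty$, note that since $\mu \sim m$ the null sets of $\mu$ are exactly the Lebesgue null sets, which are translation invariant; hence $\mu$ is q.t.i. Theorem~\ref{t:linfty} (equivalence of (1) and (2)) then shows that $\{T_t\}$ acts on $L^{\infty}(\mu)$.

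There is essentially no obstacle here beyond assembling the already-proven equivalences in the right order; the substantive work lies in Theorems~\ref{t:transmeas} and \ref{t:linfty}, and the corollary is a bookkeeping consequence of the fact that the characterizing condition on $h$ is $p$-free.
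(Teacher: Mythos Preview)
Your proposal is correct and is exactly the intended argument: the paper states the corollary without proof immediately after Theorem~\ref{t:transmeas}, relying on precisely the observation you make, namely that the characterizing condition on $h=d\mu/dm$ is independent of $p$, so the forward and converse halves of Theorem~\ref{t:transmeas} transfer invariance between any two finite exponents, while equivalence with $m$ handles $p=\infty$ via Theorem~\ref{t:linfty}.
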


\begin{example} \label{e:transmu} Let $d\mu = h\,dm$ where $h(x) =
\exp(c|x|^{\al}).$ Then $\mu$ satisfies the conditions of the
Theorem for any $c \in \bbR$ and $ 0 \leq \al \leq 1.$
\end{example}


\section{Relation to the work of D. Bell}
Given a Radon measure $\mu$ on $(\bbR, \fB(\bbR))$, define $\mu_t$
to be the translated measure, $\mu_t(A) = \mu(A - t).$ Note that
$\mu$ is quasi-translation invariant (Definition~\ref{d:qti}) if and
only if $\mu$ and $\mu_t$ are equivalent measures, for all $t \in
\bbR.$

\begin{theorem} \label{t:bell} (Bell~\cite{Be}) The following are equivalent:
\begin{enumerate}
\item  $\mu$ is q.t.i. on $\bbR$, and the family of Radon-Nikodym
derivatives
\[ h(t, x) = \frac{d\mu_t}{d\mu} \]
is separately continuous in $t$ and $x$.
\item $\mu$ is equivalent to Lebesgue measure $m$ with density $f =
\frac{d\mu}{dm}$, and $f$ is continuous and positive on $\bbR.$
\end{enumerate}
\end{theorem}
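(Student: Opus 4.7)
The plan is to prove the two implications separately, with $(2) \Rightarrow (1)$ being essentially a routine calculation, while $(1) \Rightarrow (2)$ uses Lemma~\ref{l:zeromeas} together with a Fubini argument to upgrade the a.e.-defined density $f = d\mu/dm$ into a continuous and strictly positive function.

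For $(2) \Rightarrow (1)$: if $d\mu = f\,dm$ with $f$ continuous and strictly positive, then $\mu(E) = 0$ iff $m(E) = 0$, and translation invariance of $m$ gives q.t.i. The same invariance yields $d\mu_t/dm(x) = f(x-t)$, so $h(t,x) = f(x-t)/f(x)$ is jointly continuous, hence separately continuous.

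For $(1) \Rightarrow (2)$: since $\mu$ is nonzero and q.t.i., Lemma~\ref{l:zeromeas} gives $\mu \sim m$, and the density $f = d\mu/dm$ satisfies $h(t,x) = f(x-t)/f(x)$ for a.e. $x$ for each fixed $t$, hence a.e. in $\bbR^2$ by Fubini. Consequently I can select $x_0$ with $f(x_0) > 0$ such that $f(x_0 - t) = f(x_0)\,h(t, x_0)$ holds for a.e. $t$. The right-hand side is continuous in $t$ by the separate continuity of $h$, so $\tilde f(s) := f(x_0)\,h(x_0 - s, x_0)$ is a continuous function that agrees with $f$ almost everywhere, and I replace $f$ by $\tilde f$. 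Now with $f$ continuous, the a.e. identity $f(x-t) = f(x)\,h(t,x)$ for each fixed $t$ has both sides continuous in $x$ (using separate continuity of $h$ in $x$), so it holds pointwise on $\bbR$. Hence if $f(x_1) = 0$ for some $x_1$, then $f(x_1 - t) = 0$ for every $t \in \bbR$, forcing $f \equiv 0$ and contradicting $\mu \sim m$; so $f > 0$ everywhere.

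The main obstacle is the double use of separate continuity: continuity in $t$ first delivers a continuous version $\tilde f$ of $f$, and then, with $f$ now continuous, continuity in $x$ lets the identity $f(x-t) = f(x)\,h(t,x)$ be upgraded from a.e. to pointwise in $x$, which in turn rules out zeros. The order of these two steps matters, and the Fubini step is what secures a single $x_0$ on which the required a.e.-in-$t$ identity holds.
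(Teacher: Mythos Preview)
Your proof is correct and follows essentially the same route as the paper: both directions rest on the equivalence $\mu \sim m$ from Lemma~\ref{l:zeromeas} (the paper cites the equivalent Theorem~\ref{t:linfty}) together with the identity $f(x-t) = h(t,x)\,f(x)$. The paper argues more informally, evaluating this identity directly at $x=0$ to obtain $f(t)=h(-t,0)f(0)$ and reading off positivity from the same formula; your Fubini step to select a good $x_0$ and then upgrade the a.e.\ identity to a pointwise one is a welcome sharpening of the same idea rather than a different approach.
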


\begin{proof} Suppose $(1)$ holds. By Theorem~\ref{t:linfty}, $\mu $
is equivalent to $m$. If $f = \frac{d\mu}{dm}, $ then $f(x-t) =
\frac{d\mu_t(x)}{dm(x)}.$ Hence,
\[f(x-t) = \frac{d\mu_t(x)}{dm(x)} =
\frac{d\mu_t(x)}{d\mu(x)}\,\frac{d\mu(x)}{dm(x)} = h(t, x)f(x) .\]
Now for each $x \in \bbR$ there exits $t \in \bbR$  such that
$f(x-t) > 0 $. But then the formula implies $f(x) > 0.$  Thus $f$ is
strictly positive. Since $h$ is continuous in  $t$ for fixed $x$,
that implies that $f(t) = h(-t, 0)f(0)$ is continuous in $t$.

Now assume that $(2)$ holds. By Theorem~\ref{t:linfty} $\mu$ is
q.t.i. and the formula in the first part of the proof implies that
$h$ is separately (in fact, jointly) continuous in $t$ and $x$.
\end{proof}

\begin{remark} \label{r:comparison} Theorem~\ref{t:linfty} is not the
same as Theorem~\ref{t:bell}. Theorem~\ref{t:linfty} says that $\mu$
is q.t.i. if and only if $\mu$ is equivalent to $m$, whereas
Theorem~\ref{t:bell} assumes more about $\mu$ and asserts more.
\end{remark}

\end{document}